\newtheorem{theorem}{Theorem}[section]
\newtheorem{corollary}[theorem]{Corollary}
\newtheorem{lemma}[theorem]{Lemma}
\theoremstyle{remark}
\newtheorem{remark}{Remark}
\newtheorem*{conjecture}{Conjecture}
\newcommand{\B}{\mathbb{B}}
\newcommand{\C}{\mathbb{C}}
\newcommand{\D}{\mathbb{D}}
\newcommand{\N}{\mathbb{N}}
\newcommand{\Bloch}{\mathcal{B}}
\newcommand{\lilBloch}{\mathcal{B}_0}
\newcommand{\norm}[1]{\left|\left|#1\right|\right|}
\newcommand{\supnorm}[1]{\norm{#1}_\infty}
\newcommand{\blochnorm}[1]{\norm{#1}_\Bloch}
\newcommand{\ip}[1]{\left\langle #1 \right\rangle}
\newcommand{\conj}[1]{\overline{#1}}
\newcommand{\Aut}{\mathrm{Aut}}
\renewcommand{\mod}[1]{\left|#1\right|}
\title[Weighted Composition Operators]
{Weighted Composition Operators from $H^\infty$ to the Bloch Space of a Bounded Homogeneous Domain}
\author{Robert F. Allen\textsuperscript{1} and Flavia Colonna\textsuperscript{2}}
\address{\textsuperscript{1}Department of Mathematics, University of Wisconsin--La Crosse}
\address{\textsuperscript{2}Department of Mathematical Sciences, George Mason University}
\email{allen.rob3@uwlax.edu, fcolonna@gmu.edu}
\date{}
\subjclass[2010]{primary: 47B38, secondary: 32A18, 30D45}
\keywords{Weighted composition operators, Bloch space, Homogeneous domains.}
\begin{document}

\begin{abstract}  Let $D$ be a bounded homogeneous domain in $\C^n$.  In this paper, we study the bounded and the compact weighted composition operators mapping the Hardy space $H^\infty(D)$ into the Bloch space of $D$. We characterize the bounded weighted composition operators, provide operator norm estimates, and give sufficient conditions for compactness. We prove that these conditions are necessary in the case of the unit ball and the polydisk. We then show that if $D$ is a bounded symmetric domain, the bounded multiplication operators from $H^\infty(D)$ to the Bloch space of $D$ are the operators whose symbol is bounded. \end{abstract}

\maketitle

\section{Introduction}
Let $X$ be a Banach space of holomorphic functions on a bounded domain $D$ in $\C^n$.  For $\psi$ a complex-valued holomorphic function on $D$ and $\varphi$ a holomorphic self-map of $D$, we define a linear operator $W_{\psi,\varphi}$ on $X$, called the \textit{weighted composition operator with multiplicative symbol $\psi$ and composition symbol $\varphi$}, by $$W_{\psi,\varphi} f = \psi(f\circ\varphi),\ f\in X.$$
Setting $M_\psi f=\psi f$ and $C_\varphi f=f\circ\varphi$, we may write $W_{\psi,\varphi}=M_\psi C_\varphi$. Then $M_\psi$ is called \textit{multiplication operator with symbol $\psi$} and $C_\varphi$ is called \textit{composition operator with symbol $\varphi$}.

	The study of the weighted composition operators on the Bloch space began with the work of Ohno and Zhao in \cite{OhnoZhao:01} where the operators from the Bloch space of the open unit disk $\D$ into itself were considered. In higher dimensions, these operators on the Bloch space have been studied by Chen, Stevi\'c and Zhou in \cite{ChenStevicZhou:09} (which was motivated by papers and \cite{Stevic:06} and \cite{ClahaneStevicZhou}), and by the authors in \cite{AllenColonna:09}. For related work, see \cite{ZhouChen:05}.

	In \cite{Ohno:01}, Ohno investigated the weighted composition operators between the Bloch space of $\D$ and the Hardy space $H^\infty(\D)$ of bounded analytic functions on $\D$.

	Characterizations of the boundedness and the compactness of the weighted composition operators from the Bloch space to $H^\infty$ were given by Hosokawa, Izuchi and Ohno in \cite{HosokawaIzuchiOhno:05} in the one-dimensional case, and by Li and Stevi\'c in the case of the unit ball \cite{LiStevic:08}. In \cite{Stevic:08}, Stevi\'c determined the norm of the bounded weighted composition operators from the Bloch space and the little Bloch space to the weighted Hardy space $H^\infty_\mu$ of the unit ball (where $\mu$ is a weight). The weighted composition operators from a larger class of spaces known as the $\alpha$-Bloch spaces to $H^\infty$ on the polydisk where studied by Li and Stevi\'c in \cite{LiStevic:07-II}. In \cite{AllenColonna:09}, we characterized the boundedness, determined the operator norm, and gave a sufficient condition for compactness in the case of a general bounded homogeneous domain in $\C^n$. 

	The study of the weighted composition operators from $H^\infty$ to the Bloch space in several variables was carried out by Li and Stevi\'c when the ambient space is the unit polydisk \cite{LiStevic:07}. For the case of the unit ball, the study of these operators from $H^\infty$ into the $\alpha$-Bloch spaces was carried out by Li and Stevi\'c in \cite{LiStevic:08} and Zhang and Chen in \cite{ZhangChen:09}.

	In this paper, we analyze the weighted composition operators from $H^\infty$ into the Bloch space on a bounded homogeneous domain in $\C^n$.

	In Section 2, we give the background on the bounded homogeneous domains in $\C^n$ and special class of such domains that have a canonical representation (up to biholomorphic transformation) due to Cartan \cite{Cartan:35}, the bounded symmetric domains. We then review the notion of the Bloch space of a bounded homogeneous domain \cite{Hahn:75}, \cite{Timoney:80-I}, and of a subspace we refer to as the $*$-little Bloch space. We also recall the definition of little Bloch space on a bounded symmetric domain \cite{Timoney:80-II}.

In Section 3, in the environment of a general bounded homogeneous domain, we characterize the bounded weighted composition operators from $H^\infty$ into the Bloch space and into the $*$-little Bloch space, thereby extending the results of Ohno \cite{Ohno:01} in the one-dimensional case, of Li and Stevi\'c for the polydisk \cite{LiStevic:07} and for the unit ball \cite{LiStevic:08}, and of Zhang and Chen for the unit ball \cite{ZhangChen:09}. We also give estimates on the operator norm.

In Section 4, we describe sufficient conditions for the compactness of a weighted composition operator from $H^\infty$ to either the Bloch space or the $*$-little Bloch space of a bounded homogeneous domain. We conjecture these conditions to be necessary, and prove the necessity when the domain is the unit polydisk and the unit ball. In the latter setting, we obtain a result equivalent to a special case of Theorem 2 in \cite{ZhangChen:09}.  Furthermore, we show that compactness and boundedness are equivalent when the operator maps $H^\infty(\B_n)$ into $\lilBloch(\B_n)$, a result not observed in \cite{ZhangChen:09}.

In Section 5, we show that the bounded multiplication operators from $H^\infty$ into the Bloch space (respectively, the $*$-little Bloch space) of a bounded symmetric domain are precisely those whose symbol is bounded (respectively, in the $*$-little Bloch space and bounded). Furthermore, we obtain operator norm estimates in terms of the Bergman constant of the domain. We then discuss the boundedness of the composition operators on a bounded homogeneous domain  and establish operator norm estimates. While composition operators from $H^\infty$ to the $\alpha$-Bloch spaces of the polydisk have been studied by Stevi\'c in \cite{Stevic:06}, to our knowledge, there are no results in the literature for the multiplication or the composition operators between $H^\infty$ and the Bloch spaces on general bounded homogeneous or symmetric domains.

Finally, in Section 6, we show there are no isometries amongst the multiplication or composition operators from the Hardy space $H^\infty$ into the Bloch space when the domain is the unit disk. We also show that there are no isometric weighted composition operators from the Bloch space into $H^\infty$ if the ambient space is the unit polydisk. We conjecture that, likewise, there are no isometric weighted composition operators from $H^\infty$ to the Bloch space.

\section{Preliminaries}
Let $D$ be a domain in $\C^n$.  We denote by $H(D)$ the set of holomorphic functions from $D$ into $\C$, and by $\Aut(D)$ the set of biholomorphic maps of $D$. The space $H^\infty(D)$ of bounded holomorphic functions on $D$ is a Banach algebra equipped with norm $\supnorm{f} = \sup_{z \in D} \mod{f(z)}$.

A domain $D$ is {\it homogeneous} if $\Aut(D)$ acts transitively on $D$.  Every homogeneous domain is equipped with a canonical metric invariant under the action of $\Aut(D)$, called the {\it Bergman metric} \cite{Helgason:62}.  A domain $D$ is {\it symmetric at $z_0 \in D$} if there exists an involution $\varphi \in \Aut(D)$ for which $z_0$ is an isolated fixed point.  A domain that is symmetric at every point is called {\it symmetric}.  A bounded symmetric domain is homogeneous and a bounded homogeneous domain that is symmetric at one point is symmetric \cite{Helgason:62}. The unit ball
$$\B_n=\{z=(z_1,\dots,z_n)\in\C^n: \norm{z}^2=\sum_{k=1}^n |z_k|^2<1\}$$ and the unit polydisk $$\D^n=\{(z_1,\dots,z_n)\in\C^n: |z_k|<1, k=1,\dots,n\}$$ are bounded symmetric domains since they are homogeneous and symmetric at the origin via the map $z\mapsto -z$. While bounded homogeneous domains in dimensions 2 and 3 are symmetric, there are examples of bounded homogeneous domains in dimensions greater than 3 which are not symmetric \cite{Pjat:59}.

In \cite{Cartan:35}, Cartan showed that every bounded symmetric domain in $\C^n$ is biholomorphically equivalent to a finite product of irreducible bounded symmetric domains, unique up to order. He classified the irreducible bounded symmetric domains into six classes, four of which are known as the \emph{Cartan classical domains}, and the other two, each consisting of a single domain, are known as the \emph{exceptional domains}.  A bounded symmetric domain written as such a product is said to be in \emph{standard form}.

The Cartan classical domains are defined as:
$$\begin{aligned}
R_I &= \{Z \in M_{m,n}(\C) : I_m - ZZ^* > 0\}, \text{ for $m \geq n \geq 1$},\\
R_{II} &= \{Z \in M_n(\C) : Z=Z^T, I_n - ZZ^* > 0\}, \text{ for $n \geq 2$},\\
R_{III} &= \{Z \in M_n(\C) : Z = -Z^T, I_n - ZZ^* > 0\}, \text{ for $n \geq 5$},\\
R_{IV} &= \left\{z=(z_1,\dots,z_n) \in \C^n : A > 0, \norm{z} < 1\right\},\text{ for } n \geq 5,
\end{aligned}$$
where $M_{m,n}(\C)$ denotes the set of $m\times n$ matrices with complex entries, $M_n(\C) = M_{n,n}(\C)$, $Z^T$ and $Z^*$ are the transpose and the adjoint of $Z$, respectively, and $A = |\sum z_k^2|^2 + 1 - 2\norm{z}^2$.  See \cite{Drucker:78} for a description of the exceptional domains.

The notion of Bloch function in higher dimensions was first introduced by Hahn in \cite{Hahn:75}.  In \cite{Timoney:80-I}   and \cite{Timoney:80-II}, Timoney studied in depth the Bloch functions on a bounded homogeneous domain. In this paper, we conform to his definition and notation, as follows.

Let $D$ be a bounded homogeneous domain. For $z \in D$ and $f \in H(D)$, define $$Q_f(z) =
\sup_{u \in \C^n\setminus\{0\}} \frac{\mod{\nabla(f)(z)u}}{H_z(u,\conj{u})^{1/2}},$$ where $\nabla(f)(z)$ is the gradient of $f$ at $z$, for $u=(u_1,\dots,u_n)$, $$\nabla(f)(z)u = \sum_{k = 1}^n \frac{\partial f}{\partial z_k}(z)u_k,$$ and $H_z$ is the Bergman metric on $D$ at $z$.  The Bergman metric for the unit ball $\B_n$ is defined as
\begin{equation}
\nonumber
H_z(u,\conj{v}) = \frac{n+1}{2}\cdot\frac{(1-\norm{z}^2)\ip{u,v} + \ip{u,z}\ip{z,v}}{(1-\norm{z}^2)^2},
\end{equation} where $u,v \in \C^n, z \in \B_n$, and $\ip{u,v}=\sum u_j\overline{v_j}$.  For the unit polydisk $\D^n$, the Bergman metric is defined as
\begin{equation}
\nonumber
H_z(u,\conj{v}) = \sum_{j=1}^n \frac{u_j\conj{v_j}}{(1-\mod{z_j}^2)^2},
\end{equation} where $u,v \in \C^n$ and $z \in \D^n$.

The {\it Bloch space} $\Bloch(D)$ on a bounded homogeneous domain $D$ is the set of all functions $f \in H(D)$ for which $$\beta_f = \sup_{z \in D} Q_f(z) < \infty.$$  Timoney proved that $\Bloch(D)$ is a Banach space under the norm $$\blochnorm{f} = \mod{f(z_0)} + \beta_f,$$ where $z_0$ is some fixed point in $D$ \cite{Timoney:80-I}. For convenience, we shall assume throughout that $0 \in D$ and choose $z_0 = 0$.

The {\it $*$-little Bloch space of $D$} is the subspace of $\Bloch(D)$ defined as $$\Bloch_{0*}(D) = \left\{f \in \Bloch(D) : \lim_{z \to \partial^*D} Q_f(z) = 0\right\},$$ where $\partial^* D$ is the distinguished boundary of $D$. Timoney defined the {\it little Bloch space} $\lilBloch(D)$ of a bounded symmetric domain $D$ to be the closure of the polynomials in $\Bloch(D)$. Its elements $f$ satisfy the condition
$$\lim_{z\to\partial^*D}Q_f(z)=0.$$
If $D$ is the unit ball, then $\partial D = \partial^*D$ and thus $\lilBloch(D) = \Bloch_{0*}(D)$. When $D\neq \B_n$, $\lilBloch(D)$ is a proper subspace of $\Bloch_{0*}(D)$, and $\Bloch_{0*}(D)$ is a non-separable subspace of $\Bloch(D)$ \cite{Timoney:80-II}.

In \cite{Timoney:80-I}, Timoney proved that the space $H^\infty(D)$ of bounded holomorphic functions on a bounded homogeneous domain $D$ is a subspace of $\Bloch(D)$ and for each $f \in H^\infty(D)$, $\blochnorm{f} \leq |f(0)|+ c\supnorm{f}$ where $c$ is a constant depending only on the domain $D$.  

In \cite{CohenColonna:94}, Cohen and the second author defined the \emph{Bloch constant} of a bounded homogeneous domain $D$ in $\C^n$ as $$c_D = \sup\{Q_f(z) : f \in H^\infty(D), \norm{f}_\infty\leq 1, z\in D\}.$$  The precise value of the Bloch constant was calculated for each Cartan classical domain to be the reciprocal of the inner radius of the domain with respect to the Bergman metric, that is,
\begin{eqnarray}\nonumber c_D=\frac1{\inf_{\xi\in\partial D}H_0(\xi,\overline{\xi})^{1/2}}.\end{eqnarray}
 In \cite{Zhang:97}, Zhang determined the Bloch constant for the two exceptional domains $R_V$ of dimension 16 and $R_{VI}$ of dimension 27. The value of $c_D$ for each irreducible bounded symmetric domain $D$ is shown in Theorem~\ref{cd}.

By Theorem~3 of \cite{CohenColonna:94}, extended to include the exceptional domains, if $D = D_1\times\cdots\times D_k$ is a bounded symmetric domain in standard form, then
\begin{eqnarray} c_D = \max_{1\leq j\leq k} c_{D_j}.\label{cdformula}\end{eqnarray}
Furthermore, it was shown that there exist polynomial functions $f$ (hence in the little Bloch space of $D$) such that $\norm{f}_\infty\leq 1$ and $c_D=Q_f(0)$.

Let $D$ be a bounded homogeneous domain. In \cite{AllenColonna:07}, we showed that a function $f\in H(D)$ is Bloch if and only if there exists $c>0$ such that
\begin{eqnarray} |f(z)-f(w)|\leq c\rho(z,w), \hbox{ for each }z,w\in D,\nonumber\end{eqnarray}
where $\rho$ is the Bergman distance on $D$. Furthermore, the Bloch seminorm of $f$ is precisely the infimum of all such constants $c$. As a consequence, we obtain that for a Bloch function $f$ on $D$,
\begin{eqnarray} |f(z)-f(w)|\leq \norm{f}_\Bloch\rho(z,w), \hbox{ for each }z,w\in D.\label{Lipschitzcond}\end{eqnarray}

\section{Boundedness}
From now on, unless specified otherwise, we shall assume $D$ is a bounded homogeneous domain, $\psi \in H(D)$, and $\varphi=(\varphi_1,\dots,\varphi_n)$ is a holomorphic self-map of $D$.  Define
$\theta_{\psi,\varphi}=\sup_{z\in D}|\psi(z)|\theta_\varphi(z)$, where
$$\theta_\varphi(z)=\sup\{Q_{f\circ \varphi}(z): f\in H^\infty(D), \norm{f}_\infty\le 1\}.$$

For $z \in D$, denote by $J\varphi(z)$ the Jacobian matrix of $\varphi$ at $z$, that is, the matrix whose $(j,k)$-entry is $\frac{\partial\varphi_j}{\partial z_k}(z)$.
Furthermore, define $$B_\varphi(z) = \sup_{u \in \C^n\setminus\{0\}}\;\frac{H_{\varphi(z)}(J\varphi(z)u,\conj{J\varphi(z)u})^{1/2}}{H_z(u,\conj{u})^{1/2}}.$$
The quantity $B_\varphi = \sup_{z \in D}\; B_\varphi(z)$ is bounded above by a constant dependent only on $D$ \cite{Timoney:80-I}.
By the invariance of the Bergman metric under the action of $\hbox{Aut}(D)$, if $\varphi\in\hbox{Aut}(D)$, then for each $z\in D$, $u\in \C^n$,
$$H_{\varphi(z)}(J\varphi(z)u,\conj{J\varphi(z)u})=H_z(u,\conj{u}).$$
 For $f \in \Bloch(D)$ and $z \in D$, \begin{equation}\nonumber
Q_{f\circ\varphi}(z) \leq B_\varphi(z)Q_f(\varphi(z)).\end{equation}
Taking the supremum over all functions $f\in H^\infty(D)$ with $\supnorm{f}\leq 1$, we obtain
\begin{equation}\label{inequality:B_varphi}
\theta_\varphi(z) \leq c_DB_\varphi(z)\end{equation} for each $z\in D$.

\begin{theorem}\label{ACbhd} {\normalfont{(a)}} The weighted composition operator $W_{\psi,\varphi}:H^\infty(D)\to\Bloch(D)$ is bounded if and only if $\psi\in\Bloch(D)$ and $\theta_{\psi,\varphi}$ is finite.
\smallskip

\noindent {\normalfont{(b)}} $W_{\psi,\varphi}:H^\infty(D)\to\Bloch_{0*}(D)$ is bounded if and only if $\psi\in\Bloch_{0*}(D)$, $\theta_{\psi,\varphi}$ is finite, and \begin{eqnarray}\lim_{z\to\partial^*D}|\psi(z)|\theta_\varphi(z)=0.\label{zero lim}\end{eqnarray}
Furthermore, if $W_{\psi,\varphi}$ is bounded as an operator into $\Bloch(D)$ or $\Bloch_{0*}(D)$, then the norm of $W_{\psi,\varphi}$ satisfies the estimates
\begin{eqnarray} \norm{\psi}_\Bloch \leq \norm{W_{\psi,\varphi}}\leq \norm{\psi}_\Bloch+\theta_{\psi,\varphi}.\label{estim}\end{eqnarray}
\end{theorem}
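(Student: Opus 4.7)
The plan is to prove (a) and (b) in parallel: sufficiency rests in both cases on the same product-rule estimate, necessity uses the same test-function trick, and the only extra work in (b) is to verify the boundary-decay condition \eqref{zero lim}.

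For sufficiency, the starting point is the product rule $\nabla(\psi(f\circ\varphi))=(f\circ\varphi)\nabla\psi+\psi\nabla(f\circ\varphi)$; applying the triangle inequality directionally and then taking the supremum over $u\in\C^n\setminus\{0\}$ yields
$$Q_{W_{\psi,\varphi}f}(z)\leq\mod{f(\varphi(z))}Q_\psi(z)+\mod{\psi(z)}Q_{f\circ\varphi}(z).$$
For $\supnorm{f}\leq 1$ this is at most $Q_\psi(z)+\mod{\psi(z)}\theta_\varphi(z)\leq Q_\psi(z)+\theta_{\psi,\varphi}$, so $\beta_{W_{\psi,\varphi}f}\leq\beta_\psi+\theta_{\psi,\varphi}$. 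Combining with $\mod{(W_{\psi,\varphi}f)(0)}\leq\mod{\psi(0)}$ gives $\norm{W_{\psi,\varphi}f}_\Bloch\leq\norm{\psi}_\Bloch+\theta_{\psi,\varphi}$, which establishes boundedness and the upper bound in \eqref{estim}. Under the stronger hypotheses of (b), the same pointwise bound forces $Q_{W_{\psi,\varphi}f}(z)\to 0$ as $z\to\partial^*D$ uniformly in $\supnorm{f}\leq 1$, so $W_{\psi,\varphi}(H^\infty(D))\subseteq\Bloch_{0*}(D)$.

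For necessity, testing against the constant function $f\equiv 1$ gives $\psi=W_{\psi,\varphi}(1)$, whence $\psi\in\Bloch(D)$ (respectively $\Bloch_{0*}(D)$ in (b)) together with the lower bound $\norm{\psi}_\Bloch\leq\norm{W_{\psi,\varphi}}$ in \eqref{estim}. To bound $\theta_{\psi,\varphi}$, fix $a\in D$ and $g\in H^\infty(D)$ with $\supnorm{g}\leq 1$, and set $f_a=g-g(\varphi(a))$: then $\supnorm{f_a}\leq 2$ and $f_a(\varphi(a))=0$, so the $(f\circ\varphi)\nabla\psi$ term of the product-rule identity vanishes at $z=a$ and
$$Q_{W_{\psi,\varphi}f_a}(a)=\mod{\psi(a)}Q_{f_a\circ\varphi}(a)=\mod{\psi(a)}Q_{g\circ\varphi}(a),$$
since subtracting a constant does not change the gradient. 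Because $Q_{W_{\psi,\varphi}f_a}(a)\leq\norm{W_{\psi,\varphi}f_a}_\Bloch\leq 2\norm{W_{\psi,\varphi}}$, taking the supremum first over $g$ and then over $a$ gives $\theta_{\psi,\varphi}\leq 2\norm{W_{\psi,\varphi}}<\infty$. The same argument applies in (b), since $\Bloch_{0*}(D)\subset\Bloch(D)$.

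The remaining and most delicate step is the limit condition \eqref{zero lim} in the necessity of (b). I plan to use the reverse of the product-rule bound, $\mod{\psi(z)}Q_{f\circ\varphi}(z)\leq Q_{W_{\psi,\varphi}f}(z)+\mod{f(\varphi(z))}Q_\psi(z)$, combined with the shifted test functions $f_z=g_z-g_z(\varphi(z))$ where $g_z\in H^\infty(D)$ with $\supnorm{g_z}\leq 1$ is chosen to nearly attain $\theta_\varphi(z)$. Since $\psi\in\Bloch_{0*}(D)$ forces $Q_\psi(z)\to 0$, the task reduces to showing $Q_{W_{\psi,\varphi}f_z}(z)\to 0$ as $z\to\partial^*D$. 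This is the main obstacle of the proof: the $z$-dependence of $f_z$ prevents directly invoking the $\Bloch_{0*}(D)$-membership of any \emph{fixed} image. I expect to resolve it by a normal-families argument: along any sequence $z_n\to\partial^*D$, extract a compact-open convergent subsequence $f_{z_{n_k}}\to f_\ast\in H^\infty(D)$ with $\supnorm{f_\ast}\leq 2$, use that $W_{\psi,\varphi}f_\ast\in\Bloch_{0*}(D)$, and control the residual $W_{\psi,\varphi}(f_{z_{n_k}}-f_\ast)$ at $z_{n_k}$ via Cauchy estimates on the derivatives and the uniform boundedness of the Bergman dilation $B_\varphi$ from \eqref{inequality:B_varphi}.
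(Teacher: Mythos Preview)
Your treatment of part (a), the norm estimates, and the sufficiency direction of (b) is essentially the paper's argument. One small difference: to get $\theta_{\psi,\varphi}<\infty$ you shift each test function by a constant so that it vanishes at $\varphi(a)$, obtaining $\theta_{\psi,\varphi}\le 2\norm{W_{\psi,\varphi}}$; the paper skips the shift and simply uses the reverse triangle inequality in the product rule,
\[
\mod{\psi(z)}Q_{f\circ\varphi}(z)\le Q_{W_{\psi,\varphi}f}(z)+\mod{f(\varphi(z))}Q_\psi(z)\le \norm{W_{\psi,\varphi}}+\norm{\psi}_\Bloch,
\]
which is slightly cleaner and avoids the factor of $2$.

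For the necessity of \eqref{zero lim} in (b) the paper takes a much shorter route than the one you outline: it applies exactly the displayed reverse inequality, notes that for each fixed $f$ both right-hand terms tend to $0$ as $z\to\partial^*D$ (since $W_{\psi,\varphi}f$ and $\psi$ lie in $\Bloch_{0*}(D)$), and from this deduces \eqref{zero lim}. You instead try to handle the $z$-dependence of the extremizing test function by a normal-families argument, but the sketch as written does not close. After extracting $f_{z_{n_k}}\to f_\ast$ locally uniformly and splitting off $W_{\psi,\varphi}f_\ast\in\Bloch_{0*}(D)$, you must still show
\[
\mod{\psi(z_{n_k})}\,Q_{(f_{z_{n_k}}-f_\ast)\circ\varphi}(z_{n_k})\to 0.
\]
Bounding this by $\mod{\psi(z_{n_k})}\,B_\varphi\,Q_{f_{z_{n_k}}-f_\ast}(\varphi(z_{n_k}))$ and invoking Cauchy estimates only helps when $\varphi(z_{n_k})$ remains in a fixed compact subset of $D$, which is not assumed; and even in that case you would need $\mod{\psi(z_{n_k})}$ bounded, which can fail since a function in $\Bloch_{0*}(D)$ may grow like $\rho(z_{n_k},0)$. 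If instead you bound $Q_{(f_{z_{n_k}}-f_\ast)\circ\varphi}(z_{n_k})\le\norm{f_{z_{n_k}}-f_\ast}_\infty\,\theta_\varphi(z_{n_k})$, the estimate becomes circular, since $\mod{\psi(z_{n_k})}\theta_\varphi(z_{n_k})\to 0$ is precisely what you are trying to prove. So the normal-families step, as sketched, has a genuine gap.
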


\begin{proof} (a) Assume first $W_{\psi,\varphi}$ is bounded. Then $\norm{\psi}_\Bloch=\norm{W_{\psi,\varphi}1}_\Bloch$, thus $\psi\in\Bloch(D)$ and the lower estimate of (\ref{estim}) holds. For each $f\in H^\infty(D)$ with $\norm{f}_\infty\le 1$, and each $z\in D$, we have
\begin{eqnarray} \nabla(\psi(f\circ\varphi))(z) = \psi(z)\nabla(f\circ\varphi)(z) + f(\varphi(z))\nabla(\psi)(z)\label{nabla-rule}\end{eqnarray} so that
$$\mod{\psi(z)}Q_{f\circ \varphi}(z)\le \norm{W_{\psi,\varphi}f}_\Bloch +Q_\psi(z)\leq \norm{W_{\psi,\varphi}}+\norm{\psi}_\Bloch.$$
Thus, taking the supremum over all such functions $f$ and over all $z\in D$, we see that $\theta_{\psi,\varphi}$ is finite.

Conversely, assume $\psi\in\Bloch(D)$ and $\theta_{\psi,\varphi}$ is finite. Then for $f\in H^{\infty}(D)$, with $\|f\|_\infty\leq 1$, we have
\begin{eqnarray} Q_{\psi(f\circ\varphi)}(z)\leq |\psi(z)|Q_{f\circ\varphi}(z)+|f(\varphi(z))|Q_\psi(z)\nonumber\leq \theta_{\psi,\varphi}+\beta_\psi.\nonumber\end{eqnarray}
Thus, $W_{\psi,\varphi}f\in \Bloch(D)$ and
$$\norm{\psi(f\circ\varphi)}_\Bloch\leq |\psi(0)f(\varphi(0))|+\theta_{\psi,\varphi}+\beta_\psi\leq \norm{\psi}_\Bloch+\theta_{\psi,\varphi},$$
proving the boundedness of $W_{\psi,\varphi}$ and the upper estimate of (\ref{estim}).\\
\\
\noindent (b) From part (a), it suffices to show that if $W_{\psi,\varphi}$ is bounded, then (\ref{zero lim}) holds and, conversely, if $\psi\in\Bloch_{0*}(D)$, $\theta_{\psi,\varphi}$ is finite, and (\ref{zero lim}) holds, then for each $f\in \Bloch_{0*}(D)$, $\psi(f\circ \varphi)\in \Bloch_{0*}(D).$

Assume $W_{\psi,\varphi}$ is bounded. Then $\psi = W_{\psi,\varphi}1 \in \Bloch_{0*}(D)$ and, for each $f\in H^\infty(D)$ and each $z\in D$, from (\ref{nabla-rule}) we have 
$$|\psi(z)|Q_{f\circ \varphi}(z)\leq Q_{\psi(f\circ \varphi)}(z)+Q_\psi(z)\norm{f}_\infty.$$
Taking the limit as $z\to\partial^*D$, we obtain $|\psi(z)|Q_{f\circ \varphi}(z)\to 0$. Since this holds for each $f\in H^\infty(D)$, we deduce (\ref{zero lim}).

Next assume $\psi\in\Bloch_{0*}(D)$, $\theta_{\psi,\varphi}$ is finite, and (\ref{zero lim}) holds. Then for each nonzero function $f\in H^\infty(D)$ and each $z\in D$, using the function $\tilde{f}=f/\norm{f}_\infty$, we see that $Q_{f\circ \varphi}(z)\le \theta_\varphi(z)\norm{f}_\infty$. Thus, for each $f\in H^\infty(D)$ and each $z\in D$, we obtain
\begin{eqnarray}Q_{\psi(f\circ \varphi)}(z)&\leq& Q_\psi(z)\|f||_\infty+|\psi(z)|Q_{f\circ\varphi}(z)\nonumber\\
&\leq& (Q_\psi(z)+|\psi(z)|\theta_{\varphi}(z))\norm{f}_\infty.\nonumber\end{eqnarray} Taking the limit as $z\to\partial^*D$, we conclude that $\psi(f\circ \varphi)\in \Bloch_{0*}(D)$.
\end{proof}

In the special case of $D=\B_n$, the finiteness of $\theta_{\psi,\varphi}$ follows from (\ref{zero lim}) since $\partial^*\B_n=\partial B_n$.  From this, we obtain the following characterization of the bounded weighted composition operators from $H^\infty(\B_n)$ into $\lilBloch(\B_n)$.

\begin{corollary}\label{corollary:bounded_on_B_n} $W_{\psi,\varphi}:H^\infty(\B_n) \to \lilBloch(\B_n)$ is bounded if and only if $\psi \in \lilBloch(\B_n)$ and \begin{equation} \lim_{\norm{z} \to 1} \mod{\psi(z)}\theta_\varphi(z) = 0.\nonumber\end{equation}\end{corollary}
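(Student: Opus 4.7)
The plan is to deduce this corollary directly from Theorem~\ref{ACbhd}(b) applied to $D=\B_n$, using the two special features of the unit ball recorded in Section~2: the distinguished boundary $\partial^*\B_n$ coincides with the full topological boundary $\partial \B_n$, so convergence to $\partial^*\B_n$ is the same as $\|z\|\to 1$, and moreover $\lilBloch(\B_n)=\Bloch_{0*}(\B_n)$. With these identifications, Theorem~\ref{ACbhd}(b) immediately translates into the statement that $W_{\psi,\varphi}:H^\infty(\B_n)\to\lilBloch(\B_n)$ is bounded if and only if $\psi\in\lilBloch(\B_n)$, $\theta_{\psi,\varphi}<\infty$, and $\lim_{\|z\|\to 1}|\psi(z)|\theta_\varphi(z)=0$.

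To obtain the cleaner statement of the corollary, the remaining task is to show that, in the ball setting, the finiteness of $\theta_{\psi,\varphi}$ is a consequence of the limit condition and therefore can be dropped from the list. By inequality (\ref{inequality:B_varphi}), $\theta_\varphi(z)\le c_{\B_n}B_\varphi(z)$, and Timoney's result cited in Section~3 guarantees that $B_\varphi$ is bounded on all of $\B_n$ by a constant depending only on $\B_n$. Hence $\theta_\varphi$ itself is bounded on $\B_n$. Granted the limit condition, choose $r\in(0,1)$ so that $|\psi(z)|\theta_\varphi(z)\le 1$ whenever $\|z\|>r$. On the compact set $\{z:\|z\|\le r\}\subset\B_n$ the holomorphic function $\psi$ is bounded, and $\theta_\varphi$ is bounded there by the preceding remark, so the product $|\psi|\theta_\varphi$ is bounded on $\{\|z\|\le r\}$ as well. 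Taking the supremum over all of $\B_n$ yields $\theta_{\psi,\varphi}<\infty$.

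There is no real obstacle in this proof: the corollary is essentially a specialization of Theorem~\ref{ACbhd}(b) plus the observation that $\theta_\varphi$ is globally bounded on $\B_n$. The only point that requires any care is separating $\B_n$ into a compact core and a boundary collar and invoking the appropriate bound on each piece.
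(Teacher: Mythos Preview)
Your argument is correct and follows exactly the approach the paper takes: specialize Theorem~\ref{ACbhd}(b) to $\B_n$ using $\partial^*\B_n=\partial\B_n$ and $\lilBloch(\B_n)=\Bloch_{0*}(\B_n)$, and then observe that the finiteness of $\theta_{\psi,\varphi}$ is automatic from the limit condition. The paper dispatches this last point in a single sentence, while you spell out the compact-core/boundary-collar split and invoke the global bound $\theta_\varphi(z)\le c_{\B_n}B_\varphi(z)$ with $B_\varphi$ bounded; this is simply a more detailed version of the same reasoning.
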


In the case of $D = \D$, we can improve the lower bound on the operator norm of $W_{\psi,\varphi}$.

\begin{lemma} Let $\psi \in H(\D)$ and $\varphi$ a holomorphic self-map of $\D$.  Then
\begin{equation}\label{theta on disk}\theta_{\psi,\varphi} = \sup_{z \in \D}\mod{\psi(z)}\frac{(1-\mod{z}^2)\mod{\varphi'(z)}}{1-\mod{\varphi(z)}^2}.\end{equation}
\end{lemma}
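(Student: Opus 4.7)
The plan is to unwind the definition of $\theta_{\psi,\varphi}$ on the disk by computing the ingredients $H_z$ and $Q_f$ explicitly, and then isolating the supremum over $f$ so that a Schwarz--Pick argument gives the exact value.

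First, I would specialize the Bergman metric formula for $\B_n$ to $n=1$. Writing $u,v,z$ as complex scalars, the inner products collapse to $\langle u,v\rangle=u\conj v$, $\langle u,z\rangle\langle z,v\rangle=|z|^2u\conj v$, and the prefactor $(n+1)/2$ equals $1$, so
\begin{equation*}
H_z(u,\conj u)=\frac{(1-|z|^2)|u|^2+|z|^2|u|^2}{(1-|z|^2)^2}=\frac{|u|^2}{(1-|z|^2)^2}.
\end{equation*}
Consequently $Q_f(z)=(1-|z|^2)|f'(z)|$ for every $f\in H(\D)$, and applying this to $f\circ\varphi$ gives $Q_{f\circ\varphi}(z)=(1-|z|^2)|\varphi'(z)|\,|f'(\varphi(z))|$.

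Since the factor $(1-|z|^2)|\varphi'(z)|$ does not depend on $f$, it may be pulled out of the supremum in the definition of $\theta_\varphi(z)$, yielding
\begin{equation*}
\theta_\varphi(z)=(1-|z|^2)|\varphi'(z)|\,\sup\bigl\{|f'(\varphi(z))|:f\in H^\infty(\D),\ \supnorm{f}\le 1\bigr\}.
\end{equation*}
The inner supremum is the classical Schwarz--Pick bound evaluated at the point $w=\varphi(z)\in\D$: for any $f\in H^\infty(\D)$ with $\supnorm{f}\le 1$ one has $|f'(w)|\le\tfrac{1-|f(w)|^2}{1-|w|^2}\le\tfrac{1}{1-|w|^2}$, and this bound is attained by the disk automorphism $f(\zeta)=\frac{\zeta-w}{1-\conj w\zeta}$, for which $|f'(w)|=\tfrac{1}{1-|w|^2}$. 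Hence $\sup_f|f'(\varphi(z))|=\tfrac{1}{1-|\varphi(z)|^2}$, so
\begin{equation*}
\theta_\varphi(z)=\frac{(1-|z|^2)|\varphi'(z)|}{1-|\varphi(z)|^2}.
\end{equation*}

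Multiplying by $|\psi(z)|$ and taking the supremum over $z\in\D$ yields (\ref{theta on disk}). No real obstacle arises here; the only point that requires care is noting that the extremal function achieving the Schwarz--Pick bound at $w=\varphi(z)$ belongs to the admissible class (it is an automorphism of $\D$, hence lies in $H^\infty(\D)$ with supremum norm one), so the supremum is actually attained pointwise and the identity, rather than merely an inequality, is obtained.
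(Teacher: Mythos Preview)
Your proof is correct and follows essentially the same approach as the paper's: both compute $Q_{f\circ\varphi}(z)=(1-|z|^2)|\varphi'(z)|\,|f'(\varphi(z))|$, bound the $f$-dependent factor via the Schwarz--Pick lemma (the paper phrases this as $\beta_f\le\supnorm{f}$), and achieve equality with the M\"obius automorphism sending $\varphi(z)$ to $0$. Your presentation is slightly more streamlined in that you factor out the $f$-independent part before taking the supremum, but the underlying argument is identical.
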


\begin{proof} Let $f \in H^\infty(\D)$ such that $\supnorm{f} \leq 1$. Since $\beta_f\leq \supnorm{f}$, for all $z \in \D$,
$$\begin{aligned}
Q_{f\circ\varphi}(z) &= (1-\mod{z}^2)\mod{f'(\varphi(z))}\mod{\varphi'(z)}\\
&= (1-\mod{\varphi(z)}^2)\mod{f'(\varphi(z))}\frac{(1-\mod{z}^2)\mod{\varphi'(z)}}{1-\mod{\varphi(z)}^2}\\
&\leq \beta_f\frac{(1-\mod{z}^2)\mod{\varphi'(z)}}{1-\mod{\varphi(z)}^2}\\
&\leq \frac{(1-\mod{z}^2)\mod{\varphi'(z)}}{1-\mod{\varphi(z)}^2}.
\end{aligned}$$  Thus $$\theta_{\varphi}(z) \leq \frac{(1-\mod{z}^2)\mod{\varphi'(z)}}{1-\mod{\varphi(z)}^2}.$$  On the other hand, fixing $z \in \D$, the function $$f(w) = \frac{\varphi(z)-w}{1-\conj{\varphi(z)}w},\ w\in\D,$$ is an automorphism of $\D$, and thus $\supnorm{f} = 1$. Moreover, \begin{eqnarray}Q_{f\circ\varphi}(z) = \frac{(1-\mod{z}^2)\mod{\varphi'(z)}}{1-\mod{\varphi(z)}^2}\label{hypderivative}\end{eqnarray} and so $\theta_\varphi(z) = \frac{(1-\mod{z}^2)\mod{\varphi'(z)}}{1-\mod{\varphi(z)}^2},$  which yields (\ref{theta on disk}).
\end{proof}

\begin{theorem}\label{betterestimates} Let $W_{\psi,\varphi}$ be bounded from $H^\infty(\D)$ to $\Bloch(\D)$ or $\lilBloch(\D)$.  Then
$$\max\{\blochnorm{\psi}, \theta_{\psi,\varphi}\} \leq \norm{W_{\psi,\varphi}} \leq \blochnorm{\psi} + \theta_{\psi,\varphi}.$$
\end{theorem}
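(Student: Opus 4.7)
The upper estimate and the inequality $\|\psi\|_{\mathcal B}\le \|W_{\psi,\varphi}\|$ are already contained in Theorem~\ref{ACbhd} applied to $D=\D$ (since $\Bloch_{0*}(\D)=\lilBloch(\D)$ in one variable). Thus the only new content is the lower bound
$$\theta_{\psi,\varphi}\le \|W_{\psi,\varphi}\|.$$
My plan is to produce, for each $z\in \D$, a test function $f_z\in H^\infty(\D)$ with $\|f_z\|_\infty=1$ such that $Q_{W_{\psi,\varphi}f_z}(z)$ already equals the quantity $|\psi(z)|(1-|z|^2)|\varphi'(z)|/(1-|\varphi(z)|^2)$ appearing on the right-hand side of \eqref{theta on disk}.

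The natural candidate comes straight from the proof of the previous lemma: for a fixed $z\in\D$, set
$$f_z(w)=\frac{\varphi(z)-w}{1-\conj{\varphi(z)}w},\qquad w\in\D.$$
This is an automorphism of $\D$, so $\|f_z\|_\infty=1$, and crucially $f_z(\varphi(z))=0$. A short computation gives $f_z'(\varphi(z))=-1/(1-|\varphi(z)|^2)$. Differentiating $W_{\psi,\varphi}f_z=\psi\cdot(f_z\circ\varphi)$ by the product rule yields
$$(W_{\psi,\varphi}f_z)'(z)=\psi'(z)f_z(\varphi(z))+\psi(z)f_z'(\varphi(z))\varphi'(z)=-\frac{\psi(z)\varphi'(z)}{1-|\varphi(z)|^2},$$
where the first term vanishes by the choice of $f_z$. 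Therefore
$$Q_{W_{\psi,\varphi}f_z}(z)=(1-|z|^2)|(W_{\psi,\varphi}f_z)'(z)|=|\psi(z)|\,\frac{(1-|z|^2)|\varphi'(z)|}{1-|\varphi(z)|^2}.$$

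Combining with $\|W_{\psi,\varphi}f_z\|_{\mathcal B}\ge \beta_{W_{\psi,\varphi}f_z}\ge Q_{W_{\psi,\varphi}f_z}(z)$ and the fact that $\|f_z\|_\infty=1$, we obtain
$$\|W_{\psi,\varphi}\|\ge \|W_{\psi,\varphi}f_z\|_{\mathcal B}\ge |\psi(z)|\,\frac{(1-|z|^2)|\varphi'(z)|}{1-|\varphi(z)|^2}.$$
Taking the supremum over $z\in\D$ and applying the identity \eqref{theta on disk} yields $\theta_{\psi,\varphi}\le \|W_{\psi,\varphi}\|$, proving the sharper lower estimate $\max\{\|\psi\|_{\mathcal B},\theta_{\psi,\varphi}\}\le \|W_{\psi,\varphi}\|$.

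The only subtle point is the observation that makes the estimate clean: picking $f_z$ with $f_z(\varphi(z))=0$ forces the unwanted $\psi'(z)f_z(\varphi(z))$ cross term in the product rule to disappear, so that $Q_{W_{\psi,\varphi}f_z}(z)$ reproduces exactly the $z$-slice of $\theta_{\psi,\varphi}$ rather than being contaminated by a term involving $Q_\psi(z)$. Once this is noticed, everything else is straightforward, and the result extends verbatim to the case $W_{\psi,\varphi}\colon H^\infty(\D)\to\lilBloch(\D)$ since the same estimate $\|W_{\psi,\varphi}f_z\|_{\mathcal B}\le \|W_{\psi,\varphi}\|$ holds there as well.
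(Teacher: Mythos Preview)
Your proof is correct and is essentially identical to the paper's own argument: the paper also reduces to showing $\theta_{\psi,\varphi}\le\|W_{\psi,\varphi}\|$, uses the same disk automorphism $f_\lambda(w)=(\varphi(\lambda)-w)/(1-\overline{\varphi(\lambda)}w)$ as test function, and exploits $f_\lambda(\varphi(\lambda))=0$ to eliminate the $\psi'$ cross term before taking the supremum over $\lambda$. The only cosmetic difference is that the paper cites the identity \eqref{hypderivative} from the preceding lemma rather than recomputing $f_\lambda'(\varphi(\lambda))$ explicitly.
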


\begin{proof} By Theorem~\ref{ACbhd}, It suffices to show that $\theta_{\psi,\varphi} \leq \norm{W_{\psi,\varphi}}$.  Fix $\lambda \in \D$ and for $z\in\D$,
let $$f_\lambda(z) = \frac{\varphi(\lambda)-z}{1-\conj{\varphi(\lambda)}z}.$$
Then $\supnorm{f_\lambda} = 1$ and $f_\lambda(\varphi(\lambda))=0$. From (\ref{hypderivative}), we obtain
$$\begin{aligned}
\norm{W_{\psi,\varphi}} &\geq \blochnorm{W_{\psi,\varphi} f_\lambda}\\
&\geq \sup_{z \in \D} (1-\mod{z}^2)\mod{\psi'(z)f_\lambda(\varphi(z)) + \psi(z)(f_\lambda\circ\varphi)'(z)}\\
&\geq |\psi(\lambda)|Q_{f_\lambda\circ\varphi}(\lambda)\\
&= \frac{(1-\mod{\lambda}^2)\mod{\psi(\lambda)}\mod{\varphi'(\lambda)}}{1-\mod{\varphi(\lambda)}^2}.
\end{aligned}$$  Taking the supremum over all $\lambda \in \D$, we get $\norm{W_{\psi,\varphi}}\geq \theta_{\psi,\varphi}$.\end{proof}

\section{Compactness}
The following lemma will be used to prove a sufficient condition for the compactness of $W_{\psi,\varphi}$.

\begin{lemma}\label{lemma: Compactness of wco on H^infty}  $W_{\psi,\varphi}$ is compact from $H^\infty(D)$ into $\Bloch(D)$ if and only if for every bounded sequence $\{f_k\}$ in $H^\infty(D)$ converging to 0 locally uniformly in $D$, $\blochnorm{\psi(f_k\circ\varphi)} \to 0$ as $k \to \infty$.\end{lemma}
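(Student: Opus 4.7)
The plan is the familiar Montel plus Bloch-Lipschitz argument used in characterizations of compactness for composition-type operators. The two essential inputs are Montel's theorem (a bounded sequence in $H^\infty(D)$ is a normal family, hence admits a subsequence converging locally uniformly on $D$) and the fact, immediate from (\ref{Lipschitzcond}), that convergence in $\Bloch(D)$ forces pointwise convergence: for every $h\in\Bloch(D)$, $|h(z)|\le |h(0)| + \blochnorm{h}\,\rho(z,0)\le (1+\rho(z,0))\blochnorm{h}$, so each point evaluation is a bounded linear functional on $\Bloch(D)$.

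For the forward direction, I assume $W_{\psi,\varphi}$ is compact and take a bounded sequence $\{f_k\}\subset H^\infty(D)$ converging to $0$ locally uniformly. If $\blochnorm{\psi(f_k\circ\varphi)}\not\to 0$, I pass to a subsequence along which this norm stays above some $\varepsilon>0$, then use compactness to extract a further subsequence $\{f_{k_j}\}$ with $W_{\psi,\varphi} f_{k_j}\to g$ in $\Bloch(D)$. By the pointwise continuity above, $g(z)=\lim_j \psi(z)f_{k_j}(\varphi(z))=0$ for every $z\in D$, so $g\equiv 0$, contradicting $\blochnorm{g}\ge\varepsilon$.

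For the converse, I assume the sequential criterion and let $\{f_k\}$ be a bounded sequence in $H^\infty(D)$ with $\supnorm{f_k}\le M$. Montel's theorem yields a subsequence $\{f_{k_j}\}$ converging locally uniformly to some $f\in H(D)$; passing to pointwise limits gives $\supnorm{f}\le M$, so $f\in H^\infty(D)$. The difference sequence $\{f_{k_j}-f\}$ is bounded in $H^\infty(D)$ and tends to $0$ locally uniformly, so the hypothesis produces $\blochnorm{W_{\psi,\varphi}(f_{k_j}-f)}\to 0$. The identity
$$W_{\psi,\varphi}f_{k_i}-W_{\psi,\varphi}f_{k_j}=W_{\psi,\varphi}(f_{k_i}-f)-W_{\psi,\varphi}(f_{k_j}-f)$$
shows $\{W_{\psi,\varphi} f_{k_j}\}$ is Cauchy in $\Bloch(D)$, hence converges to some $g\in\Bloch(D)$ by completeness; the pointwise-evaluation observation identifies $g$ with $W_{\psi,\varphi}f$, so $W_{\psi,\varphi}f\in\Bloch(D)$ and $W_{\psi,\varphi}f_{k_j}\to W_{\psi,\varphi}f$ in $\Bloch(D)$, which is the definition of compactness.

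The argument is essentially formal once the two tools above are in hand. The one step requiring care is the Cauchy-plus-pointwise identification of the limit in the converse, which is needed because the hypothesis does not a priori say that $W_{\psi,\varphi}$ maps $H^\infty(D)$ into $\Bloch(D)$ on all inputs; however, this is handled automatically by the pointwise-evaluation continuity. I do not foresee a significant obstacle beyond this bookkeeping.
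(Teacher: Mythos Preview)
Your proof is correct and follows essentially the same route as the paper's: Montel's theorem for the converse, and the Lipschitz estimate (\ref{Lipschitzcond}) to pass from Bloch-norm convergence to pointwise convergence in the forward direction. The only cosmetic differences are that you phrase the forward direction as a contradiction (subsequence bounded below by $\varepsilon$, then extract a further subsequence) whereas the paper extracts a convergent subsequence directly and identifies its limit via (\ref{Lipschitzcond}); and in the converse you insert an explicit Cauchy step before identifying the limit, while the paper simply writes $\psi(f_{k_j}\circ\varphi)\to\psi(f\circ\varphi)$ once $\blochnorm{W_{\psi,\varphi}(f_{k_j}-f)}\to 0$. Neither difference is substantive.
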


\begin{proof} Assume that $W_{\psi,\varphi}:H^\infty(D)\to \Bloch(D)$ is compact.  Let $\{f_k\}$ be a bounded sequence in $H^\infty(D)$ which converges to 0 locally uniformly in $D$.  By rescaling $f_k$, we may assume $\supnorm{f_k} \leq 1$ for all $k \in \N$.  We need to show that $\blochnorm{\psi(f_k\circ\varphi)} \to 0$ as $k \to \infty$.  Since $W_{\psi,\varphi}$ is compact, the sequence $\{\psi(f_k\circ\varphi)\}$ has a subsequence (which for convenience we reindex as the original sequence) converging in the Bloch norm to some function $f \in \Bloch(D)$.  We are going to show that $f$ is identically 0 by proving that $\psi(f_k\circ\varphi) \to 0$ locally uniformly.  Fix $z_0 \in D$ and, subtracting from the elements of this sequence the value of $f$ at $z_0$, we may assume $f(z_0) = 0$.  Then $\psi(z_0)f_k(\varphi(z_0)) \to 0$ as $k \to \infty$.  For $z \in D$, using (\ref{Lipschitzcond}), we find
$$\begin{aligned}
\mod{\psi(z)f_k(\varphi(z)) - f(z)} &\leq \mod{\psi(z)f_k(\varphi(z)) - f(z) - (\psi(z_0)f_k(\varphi(z_0)) - f(z_0))}\\
 &\qquad + \mod{\psi(z_0)f_k(\varphi(z_0))}\\
&\leq \blochnorm{\psi(f_k\circ\varphi) - f}\rho(z,z_0) + \mod{\psi(z_0)f_k(\varphi(z_0))}.\end{aligned}$$ The right-hand side converges to 0 locally uniformly as $k \to \infty$ since $\psi(f_k\circ\varphi)-f \to 0$ in $\Bloch(D)$.  On the other hand, $\psi(f_k\circ\varphi) \to 0$ locally uniformly, so $f$ is identically 0.

	Conversely, suppose that whenever $\{g_k\}$ is a bounded sequence in $H^\infty(D)$  converging to 0 locally uniformly in $D$, $\blochnorm{\psi(g_k\circ\varphi)} \to 0$ as $k \to \infty$. To prove the compactness of $W_{\psi,\varphi}$, it suffices to show that if $\{f_k\}$ is a sequence in $H^\infty(D)$ with $\supnorm{f_k} \leq 1$ for all $k \in \N$, there exists a subsequence $\{f_{k_j}\}$ such that $\psi(f_{k_j}\circ\varphi)$ converges in norm in $\Bloch(D)$.  Fix $z_0 \in D$.  Replacing $f_k$ by $f_k-f_k(z_0)$, we may assume that $f_k(z_0) = 0$ for all $k \in \N$.  Since $\{f_k\}$ is uniformly bounded on $D$, by Montel's theorem some subsequence $\{f_{k_j}\}$ converges locally uniformly to some holomorphic function $f$ on $D$ such that $\supnorm{f} \leq 1$. Then, letting $g_{k_j} = f_{k_j} - f$, we obtain a bounded sequence in $H^\infty(D)$ converging to 0 locally uniformly in $D$.  By the hypothesis, $\blochnorm{\psi(g_{k_j}\circ\varphi)} \to 0$ as $j \to \infty$.  Therefore, $\psi(f_{k_j}\circ\varphi)$ converges in norm to $\psi(f\circ\varphi)$, completing the proof.
\end{proof}

\begin{theorem}\label{suffcondcomp} {\normalfont{(a)}} Let $\psi \in \Bloch(D)$.  Then $W_{\psi,\varphi}:H^\infty(D)\to\Bloch(D)$ is compact if
\begin{equation}\label{compactness_on_H^infty}\lim_{\varphi(z) \to\partial D}Q_\psi(z) =0, \hbox{ and } \lim_{\varphi(z)\to\partial D} \mod{\psi(z)}\theta_\varphi(z) = 0.\end{equation}

\noindent {\normalfont{(b)}} Let $\psi \in \Bloch_{0*}(D)$. Then $W_{\psi,\varphi}:H^\infty(D) \to \Bloch_{0*}(D)$ is compact if \begin{equation} \lim_{z\to\partial D} Q_\psi(z) =0, \hbox{ and } \lim_{z\to\partial D}\mod{\psi(z)}\theta_\varphi(z) = 0.\label{cond*bloch}\end{equation}
\end{theorem}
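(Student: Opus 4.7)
I would apply Lemma~\ref{lemma: Compactness of wco on H^infty} to reduce (a) to the following: for every sequence $\{f_k\}\subset H^\infty(D)$ with $\sup_k\|f_k\|_\infty\le 1$ and $f_k\to 0$ locally uniformly on $D$, $\|\psi(f_k\circ\varphi)\|_\Bloch\to 0$. Because $\psi(0)f_k(\varphi(0))\to 0$ automatically, the work lies in the Bloch seminorm. The product rule~(\ref{nabla-rule}) gives $Q_{\psi(f_k\circ\varphi)}(z)\le |f_k(\varphi(z))|Q_\psi(z)+|\psi(z)|Q_{f_k\circ\varphi}(z)$, and I would combine it with the two inequalities $Q_{f_k\circ\varphi}(z)\le\theta_\varphi(z)\|f_k\|_\infty$ and $Q_{f_k\circ\varphi}(z)\le B_\varphi(z)Q_{f_k}(\varphi(z))$, using each where it is most efficient.

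Given $\epsilon>0$, the hypothesis~(\ref{compactness_on_H^infty}) produces a compact $K\subset D$ such that $Q_\psi(z)<\epsilon$ and $|\psi(z)|\theta_\varphi(z)<\epsilon$ whenever $\varphi(z)\notin K$. On $\{z:\varphi(z)\notin K\}$ the first bound on $Q_{f_k\circ\varphi}$ already yields $Q_{\psi(f_k\circ\varphi)}(z)<2\epsilon$. On $\{z:\varphi(z)\in K\}$ locally uniform convergence together with Cauchy's estimates on a slight enlargement of $K$ force $\sup_K|f_k|\to 0$, and the comparability of the Bergman metric to the Euclidean one on compact subsets promotes this to $\sup_K Q_{f_k}\to 0$. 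Combining these with $Q_\psi\le\beta_\psi$, with $B_\varphi(z)\le B_\varphi$ (the global constant from Timoney), and with the finiteness of $\theta_{\psi,\varphi}$, one obtains uniform smallness of $Q_{\psi(f_k\circ\varphi)}(z)$ on $\varphi^{-1}(K)$ for large $k$.

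For part~(b), continuity of $\varphi$ and the compactness of images of compact sets imply that any sequence $\{z_n\}$ with $\varphi(z_n)\to\partial D$ must itself satisfy $z_n\to\partial D$: if, to the contrary, a subsequence of $\{z_n\}$ lay in some compact $K'\subset D$, then $\{\varphi(z_n)\}$ would lie in the compact $\varphi(K')\subset D$, contradicting $\varphi(z_n)\to\partial D$. Hence the (b)-hypothesis is strictly stronger than the (a)-hypothesis, so (a) already gives compactness of $W_{\psi,\varphi}:H^\infty(D)\to\Bloch(D)$. Theorem~\ref{ACbhd}(b) applied to the (b)-hypothesis additionally forces the range of $W_{\psi,\varphi}$ into $\Bloch_{0*}(D)$; since $\Bloch_{0*}(D)$ is closed in $\Bloch(D)$, compactness transfers to compactness into $\Bloch_{0*}(D)$.

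\textbf{Main obstacle.} The delicate step is the uniform control of $|\psi(z)|Q_{f_k\circ\varphi}(z)$ on $\varphi^{-1}(K)$: this preimage need not be compact, and $\psi\in\Bloch(D)$ need not be bounded on it. One must interleave the two bounds on $Q_{f_k\circ\varphi}$—using the $\theta_\varphi$-bound where $|\psi|$ is large (whence $\theta_\varphi\le\theta_{\psi,\varphi}/|\psi|$ is small) and the $B_\varphi Q_{f_k}$-bound where $|\psi|$ stays moderate (so that the factor $|\psi|\sup_K Q_{f_k}$ is tame)—to extract the required decay.
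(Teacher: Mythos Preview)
Your plan coincides with the paper's: invoke Lemma~\ref{lemma: Compactness of wco on H^infty}, split $D$ by whether $\varphi(z)$ lies in a fixed compact $K$, apply hypothesis~(\ref{compactness_on_H^infty}) on the complement, and exploit locally uniform convergence of $f_k$ and $\nabla f_k$ on $K$ for the remaining region. Your handling of (b)---reducing to (a) via the observation that $\varphi(z_n)\to\partial D$ forces $z_n\to\partial D$, then invoking Theorem~\ref{ACbhd}(b) together with the closedness of $\Bloch_{0*}(D)$---is more explicit than the paper's one-line ``analogous''.

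You correctly isolate the real difficulty: $\varphi^{-1}(K)$ need not be compact, so $|\psi|$ may be unbounded on it. The paper in fact glosses over precisely this point, writing that $E_r=\{z:\rho(\varphi(z),\partial D)\ge r\}$ is a ``compact set'' and proceeding as though $|\psi|$ were bounded there. Your proposed interleaving, however, does not close the gap as written. On $\varphi^{-1}(K)\cap\{|\psi|>M\}$ the $\theta_\varphi$-bound only yields
\[
|\psi(z)|\,Q_{f_k\circ\varphi}(z)\le|\psi(z)|\,\theta_\varphi(z)\le\theta_{\psi,\varphi},
\]
a fixed constant independent of both $k$ and $M$; that $\theta_\varphi$ itself is small there is irrelevant once you multiply back by $|\psi|$. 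What would actually suffice is a bound of the form $\sup_z|\psi(z)|B_\varphi(z)<\infty$, giving $|\psi|\,Q_{f_k\circ\varphi}\le\bigl(|\psi|B_\varphi\bigr)\sup_K Q_{f_k}\to 0$ uniformly; but in part~(a) neither this nor the finiteness of $\theta_{\psi,\varphi}$ that you invoke is among the hypotheses, nor do you derive it from them. (For part~(b) the stronger assumption $\lim_{z\to\partial D}|\psi(z)|\theta_\varphi(z)=0$ \emph{does} force $\theta_{\psi,\varphi}<\infty$, since on any compact subset of $D$ both factors are bounded, so your argument there is on firmer ground.) In short, the obstacle you flag is genuine and is shared by the paper's proof; your sketch recognizes it but does not yet resolve it.
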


\begin{proof} Assume conditions (\ref{compactness_on_H^infty}) both hold.  
 By Lemma \ref{lemma: Compactness of wco on H^infty}, to prove that $W_{\psi,\varphi}$ is compact from $H^\infty(D)$ into $\Bloch(D)$ it suffices to show that for any bounded sequence $\{f_k\}$ such that $\supnorm{f_k} \leq 1$ and $f_k \to 0$ locally uniformly in $D$, $\blochnorm{\psi(f_k\circ\varphi)} \to 0$ as $k \to \infty$.  Let $\{f_k\}$ be such a sequence and fix $\varepsilon > 0$.  Then, there exists $r > 0$ such that for all $k \in \N$, $\mod{\psi(z)}Q_{f_k\circ\varphi}(z) < \frac{\varepsilon}{2}$ and $Q_\psi(z) < \frac{\varepsilon}{2}$ whenever $\rho(\varphi(z),\partial D) \leq r$.  Thus, if $\rho(\varphi(z),\partial D) \leq r$, then $$Q_{\psi(f_k\circ\varphi)}(z) \leq \mod{\psi(z)}Q_{f_k\circ\varphi}(z) + Q_{\psi}(z) < \varepsilon.$$  On the other hand, since $f_k \to 0$ locally uniformly in $D$, $\mod{f_k(\varphi(z))} \to 0$ uniformly on the compact set $E_r=\{z \in D : \rho(\varphi(z),\partial D) \geq r\}$. Thus, $\nabla f_k(\varphi(z))$ approaches the zero vector, and hence $Q_{f_k\circ\varphi}(z) \to 0$, uniformly on $E_r$ as $k\to\infty$.  Consequently, recalling that $\psi\in\Bloch(D)$, we see that for all $k$ sufficiently large, $Q_{\psi(f_k\circ\varphi)}(z) < \varepsilon$ for all $z \in D$.  Furthermore, $\mod{\psi(0)f_k(\varphi(0))} \to 0$ as $k \to \infty$, so $\blochnorm{\psi(f_k\circ\varphi)} \to 0$, completing the proof of (a).

The proof of part (b) is analogous.
\end{proof}

\begin{remark} If $D$ is not the unit ball, the multiplicative symbol of the weighted composition operators satisfying conditions (\ref{cond*bloch}) reduces
to a constant, and hence the compact weighted composition operators of this type have composition component which is compact. The case when $D=\B_n$ is discussed in Corollary~\ref{charcomplilBloch}.
\end{remark}

We conjecture that under boundedness assumptions, conditions (\ref{compactness_on_H^infty}) and (\ref{cond*bloch}) are necessary as well.

\begin{conjecture} If $D$ is a bounded homogeneous domain, then the bounded operator $W_{\psi,\varphi}:H^\infty(D)\to\Bloch(D)$ is compact if and only if
$$\lim_{\varphi(z) \to \partial D}Q_\psi(z) = 0, \hbox{ and }\lim_{\varphi(z)\to\partial D} \mod{\psi(z)}\theta_\varphi(z) = 0.$$
\end{conjecture}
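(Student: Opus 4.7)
The sufficiency direction is immediate from Theorem~\ref{suffcondcomp}(a), so the content of the conjecture lies in the necessity. Assume $W_{\psi,\varphi}$ is bounded and compact, and suppose for contradiction there exist $\delta>0$ and a sequence $\{z_k\}\subset D$ with $\varphi(z_k)\to\partial D$ along which either $Q_\psi(z_k)\geq\delta$ or $|\psi(z_k)|\theta_\varphi(z_k)\geq\delta$. The plan is to exhibit $\{f_k\}\subset H^\infty(D)$ with $\|f_k\|_\infty\leq 1$ and $f_k\to 0$ locally uniformly in $D$ for which $\|W_{\psi,\varphi}f_k\|_\Bloch\not\to 0$, contradicting Lemma~\ref{lemma: Compactness of wco on H^infty}.

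To attack the $Q_\psi$ condition, I would try to produce, for each $k$, a ``peak function'' $p_k\in H^\infty(D)$ with $\|p_k\|_\infty\leq 1$, $p_k(\varphi(z_k))\to 1$, $p_k\to 0$ locally uniformly on $D$, and $Q_{p_k\circ\varphi}(z_k)\to 0$. From the product rule used in the proof of Theorem~\ref{ACbhd},
$$Q_{\psi(p_k\circ\varphi)}(z_k)\;\geq\;|p_k(\varphi(z_k))|\,Q_\psi(z_k)\;-\;|\psi(z_k)|\,Q_{p_k\circ\varphi}(z_k),$$
which would be bounded below by $\tfrac{1}{2}\delta$ for large $k$, yielding the contradiction. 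To attack the $|\psi|\theta_\varphi$ condition, for each $k$ I would pick a near-extremal $g_k\in H^\infty(D)$ with $\|g_k\|_\infty\leq 1$ and $Q_{g_k\circ\varphi}(z_k)\geq \theta_\varphi(z_k)-1/k$, and then modify it (for example by subtracting $g_k(\varphi(z_k))$ and renormalizing) so that $g_k(\varphi(z_k))=0$ while $g_k\to 0$ locally uniformly. Then $|\psi(z_k)|Q_{g_k\circ\varphi}(z_k)$ stays bounded below by a multiple of $\delta$, while $Q_\psi(z_k)|g_k(\varphi(z_k))|=0$, producing the desired lower bound on $\|W_{\psi,\varphi}g_k\|_\Bloch$.

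The main obstacle is that both constructions hinge on producing, at arbitrary boundary sequences of a general bounded homogeneous domain, either explicit peak functions in $H^\infty(D)$ or near-extremal $H^\infty$ functions for $\theta_\varphi(z_k)$ whose composition with $\varphi$ has controllable derivative behavior and which can simultaneously be forced to converge to $0$ locally uniformly. In the cases $D=\B_n$ and $D=\D^n$, such constructions are classical: on the ball one uses functions built from the reproducing kernel at $\varphi(z_k)$ and its powers, and on the polydisk one uses coordinate M\"obius transformations applied componentwise. On a general bounded homogeneous domain, however, $\Aut(D)$ need not act transitively on $\partial D$, the boundary may fail to be smooth, and there is no canonical family of $H^\infty$ peak functions available at every boundary point.

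Resolving this obstacle would likely require either a geometric refinement---constructing suitable peaking functions via $\overline{\partial}$-estimates or via limits of automorphism-translated kernels as the base point tends to $\partial D$---or an entirely different argument that bypasses explicit test functions, for instance by exploiting the Lipschitz property (\ref{Lipschitzcond}) together with a duality or reproducing-kernel characterization of $\theta_\varphi$. It is precisely the absence of such a mechanism in the homogeneous setting that keeps the statement at the level of a conjecture, and any eventual proof is likely to proceed via one of these two routes.
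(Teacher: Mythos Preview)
The statement is a \emph{conjecture} in the paper; there is no proof to compare against. The paper establishes only the sufficiency direction in general (Theorem~\ref{suffcondcomp}) and proves necessity separately in the two special cases $D=\B_n$ and $D=\D^n$ (Theorems~\ref{compactnessball} and~\ref{compacnesspolydisk}), in each instance by invoking already-published characterizations (Zhang--Chen for the ball, Li--Stevi\'c for the polydisk) rather than carrying out test-function constructions from scratch.

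Your proposal is not a proof, and you correctly present it as such: you identify the sufficiency as settled, sketch the standard test-function strategy for necessity, and then diagnose the obstruction---the lack, on a general bounded homogeneous domain, of peak functions or near-extremal $H^\infty$ functions at arbitrary boundary points that also tend to zero locally uniformly. This diagnosis is accurate and matches the paper's own stance. One small point worth noting: in the special cases the paper does not actually build the test functions itself; it simply cites the external results and then passes from the conditions of those theorems to the $Q_\psi$ and $\theta_\varphi$ conditions via the elementary inequalities $Q_\psi(z)\leq\sum_k(1-|z_k|^2)\bigl|\partial\psi/\partial z_k\bigr|$ and $\theta_\varphi(z)\leq B_\varphi(z)$. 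So your sketch of the ball and polydisk cases is more detailed than what the paper itself records, but it is consistent with what the cited references do.
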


\subsection{Compactness from $H^\infty(\B_n)$ to $\lilBloch(\B_n)$ or $\Bloch(\B_n)$}
	We begin this section by extending Theorem 4 in \cite{Ohno:01} to the unit ball.

\begin{corollary}\label{charcomplilBloch} Let $\varphi$ be a holomorphic self-map of $\B_n$ and $\psi \in H(\B_n)$.  Then the following are equivalent:
\begin{enumerate}
\item[(a)] $W_{\psi,\varphi}:H^\infty(\B_n) \to \lilBloch(\B_n)$ is bounded.
\item[(b)] $W_{\psi,\varphi}:H^\infty(\B_n) \to \lilBloch(\B_n)$ is compact.
\item[(c)] $\psi \in \lilBloch(\B_n)$ and $\displaystyle\lim_{\norm{z} \to 1}\mod{\psi(z)}\theta_\varphi(z) = 0.$
\end{enumerate}\end{corollary}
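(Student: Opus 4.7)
The plan is to establish the cyclic chain of implications (b) $\Rightarrow$ (a) $\Rightarrow$ (c) $\Rightarrow$ (b), each of which reduces essentially to bookkeeping given the machinery already in place. The key structural feature of the unit ball that drives the entire argument is the coincidence $\partial^*\B_n=\partial\B_n$, which forces the identification $\lilBloch(\B_n)=\Bloch_{0*}(\B_n)$ and collapses "$z\to\partial^*\B_n$" limits into ordinary "$\norm{z}\to 1$" limits.

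First, (b) $\Rightarrow$ (a) is immediate, since every compact linear operator between Banach spaces is bounded. Next, (a) $\iff$ (c) is the content of Corollary~\ref{corollary:bounded_on_B_n}. (Technically the corollary is stated as a characterization, but the claim (a) $\Rightarrow$ (c) is precisely its forward direction, while (c) $\Rightarrow$ (a) is its converse; both are already in hand.) So the only substantive step is (c) $\Rightarrow$ (b).

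For (c) $\Rightarrow$ (b), the plan is to invoke Theorem~\ref{suffcondcomp}(b) applied to $D=\B_n$. That theorem requires three hypotheses: $\psi\in\Bloch_{0*}(\B_n)$, $\lim_{z\to\partial \B_n}Q_\psi(z)=0$, and $\lim_{z\to\partial \B_n}|\psi(z)|\theta_\varphi(z)=0$. On the ball, the first two hypotheses are not independent: $\psi\in\lilBloch(\B_n)=\Bloch_{0*}(\B_n)$ means exactly that $\lim_{\norm{z}\to 1}Q_\psi(z)=0$, because on $\B_n$ the distinguished boundary equals the topological boundary. The third hypothesis is the explicit assumption in (c). Thus Theorem~\ref{suffcondcomp}(b) applies and yields compactness of $W_{\psi,\varphi}:H^\infty(\B_n)\to\Bloch_{0*}(\B_n)$, and since $\Bloch_{0*}(\B_n)=\lilBloch(\B_n)$, this is exactly (b).

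There is no genuine obstacle to overcome; the corollary is an assembly of Corollary~\ref{corollary:bounded_on_B_n} with Theorem~\ref{suffcondcomp}(b), made possible by the fact that on the ball the relevant "little" spaces coincide. The only point that deserves a sentence in the write-up is the remark that on $\B_n$ the condition $\psi\in\lilBloch(\B_n)$ simultaneously delivers both membership in $\Bloch_{0*}(\B_n)$ and the pointwise decay $Q_\psi(z)\to 0$ at the boundary required by Theorem~\ref{suffcondcomp}(b), so that no independent hypothesis is needed beyond what is stated in (c).
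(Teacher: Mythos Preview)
Your proposal is correct and follows exactly the same cycle of implications as the paper: $(b)\Rightarrow(a)$ trivially, $(a)\Rightarrow(c)$ by Corollary~\ref{corollary:bounded_on_B_n}, and $(c)\Rightarrow(b)$ by Theorem~\ref{suffcondcomp}(b). Your additional remark that on $\B_n$ the membership $\psi\in\lilBloch(\B_n)=\Bloch_{0*}(\B_n)$ already encodes the decay $\lim_{\norm{z}\to1}Q_\psi(z)=0$ (because $\partial^*\B_n=\partial\B_n$) is exactly the point that makes Theorem~\ref{suffcondcomp}(b) applicable, and is a welcome clarification the paper leaves implicit.
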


\begin{proof}  The implication $(b) \Longrightarrow (a)$ is obvious.  The implication $(a) \Longrightarrow (c)$ follows from Corollary \ref{corollary:bounded_on_B_n}.  Finally, $(c) \Longrightarrow (b)$ follows from part (b) of Theorem~\ref{suffcondcomp}.
\end{proof}

	We now prove the above conjecture in the case of the unit ball. The following result is equivalent to a characterization of the compactness obtained by Zhang and Chen (\cite{ZhangChen:09}, Theorem~2).

\begin{theorem}\label{compactnessball} Let $\varphi$ be a holomorphic self-map of $\B_n$, and $\psi\in H(\B_n)$. Then $W_{\psi,\varphi}:H^\infty(\B_n) \to\Bloch(\B_n)$ is compact if and only if it is bounded,
\begin{eqnarray}&\lim\limits_{\norm{\varphi(z)}\to 1} Q_\psi(z) = 0, \hbox{ and }\label{c_1}\\
&\lim\limits_{\norm{\varphi(z)} \to 1}\mod{\psi(z)}\theta_\varphi(z) = 0.\label{c_2}\end{eqnarray}
\end{theorem}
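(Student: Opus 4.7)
The plan is to obtain sufficiency as a direct consequence of Theorem \ref{suffcondcomp}(a) and to establish necessity by testing the compactness criterion of Lemma \ref{lemma: Compactness of wco on H^infty} against two carefully engineered families of functions. For sufficiency, note that $\partial\B_n=\partial^*\B_n$ and that boundedness of $W_{\psi,\varphi}$ implies $\psi\in\Bloch(\B_n)$ via Theorem \ref{ACbhd}(a); hypotheses (\ref{c_1}) and (\ref{c_2}) then coincide with those of Theorem \ref{suffcondcomp}(a), which delivers compactness.

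For necessity, I would assume $W_{\psi,\varphi}$ is compact and fix any sequence $\{z_k\}\subset\B_n$ with $w_k:=\varphi(z_k)$ satisfying $\norm{w_k}\to 1$. The common building block will be
\[
\sigma_k(z)=\frac{1-\norm{w_k}^2}{1-\ip{z,w_k}},\qquad z\in\B_n,
\]
which is bounded and straightforward to verify satisfies $\supnorm{\sigma_k}\le 2$, $\sigma_k(w_k)=1$, and $\sigma_k\to 0$ locally uniformly on $\B_n$ as $\norm{w_k}\to 1$.

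To deduce (\ref{c_1}), I would take $f_k=2\sigma_k-\sigma_k^2$. This family is bounded in $H^\infty(\B_n)$, tends to $0$ locally uniformly, and satisfies $f_k(w_k)=1$ together with $\nabla f_k(w_k)=2(1-\sigma_k(w_k))\nabla\sigma_k(w_k)=0$. Lemma \ref{lemma: Compactness of wco on H^infty} then gives $\blochnorm{\psi(f_k\circ\varphi)}\to 0$, while the product rule (\ref{nabla-rule}) applied at $z_k$ collapses the gradient of $\psi(f_k\circ\varphi)$ to $\nabla\psi(z_k)$, so $Q_{\psi(f_k\circ\varphi)}(z_k)=Q_\psi(z_k)$ and hence $Q_\psi(z_k)\to 0$. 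To deduce (\ref{c_2}), for each $k$ I would select $h_k\in H^\infty(\B_n)$ with $\supnorm{h_k}\le 1$ and $\mod{\psi(z_k)}Q_{h_k\circ\varphi}(z_k)\ge\mod{\psi(z_k)}\theta_\varphi(z_k)-1/k$ (available from the definition of $\theta_\varphi$, whose finiteness on $\B_n$ follows from $B_\varphi<\infty$ and (\ref{inequality:B_varphi})), and set $g_k=\sigma_k\cdot(h_k-h_k(w_k))$. Direct computation yields $\supnorm{g_k}\le 4$, $g_k\to 0$ locally uniformly, $g_k(w_k)=0$, and $\nabla g_k(w_k)=\nabla h_k(w_k)$. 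The product rule at $z_k$ now kills the $\nabla\psi$-term, so
\[
Q_{\psi(g_k\circ\varphi)}(z_k)=\mod{\psi(z_k)}Q_{h_k\circ\varphi}(z_k)\ge\mod{\psi(z_k)}\theta_\varphi(z_k)-\tfrac{1}{k},
\]
and Lemma \ref{lemma: Compactness of wco on H^infty} forces the right-hand side to tend to $0$, establishing (\ref{c_2}).

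The main obstacle will be producing test families that simultaneously converge to $0$ locally uniformly (to activate Lemma \ref{lemma: Compactness of wco on H^infty}) and decouple one of the two summands in the product rule at the prescribed point $z_k$. The bounded bump $\sigma_k$, essentially the $H^\infty$-truncation of the normalized Bergman reproducing kernel on $\B_n$, is the natural candidate, and the algebraic modifications $2\sigma_k-\sigma_k^2$ (first-order-flat at $w_k$) and $\sigma_k(h_k-h_k(w_k))$ (value-flat at $w_k$) are engineered to perform the two decouplings cleanly; once these are in place the remainder of the argument is a short computation.
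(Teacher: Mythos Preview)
Your argument is correct. The sufficiency direction matches the paper's exactly. For necessity, however, the paper takes a much shorter path: it simply invokes Theorem~2 of \cite{ZhangChen:09}, which already furnishes (\ref{c_1}) together with the stronger statement $\lim_{\norm{\varphi(z)}\to 1}\mod{\psi(z)}B_\varphi(z)=0$, and then appeals to (\ref{inequality:B_varphi}) to pass from $B_\varphi(z)$ to $\theta_\varphi(z)$. Your approach, by contrast, is self-contained: you build explicit test families from the kernel-type bump $\sigma_k(z)=(1-\norm{w_k}^2)/(1-\ip{z,w_k})$ and use the algebraic modifications $2\sigma_k-\sigma_k^2$ and $\sigma_k(h_k-h_k(w_k))$ to isolate, respectively, the $Q_\psi$ and $\mod{\psi}\theta_\varphi$ contributions at $z_k$ via the product rule, then feed these into Lemma~\ref{lemma: Compactness of wco on H^infty}. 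The trade-off is clear: the paper's proof is brief but outsources the substantive work to \cite{ZhangChen:09}, whereas yours keeps everything internal to the tools already developed in the paper (Lemma~\ref{lemma: Compactness of wco on H^infty}, formula (\ref{nabla-rule}), and the definition of $\theta_\varphi$) and in fact reproduces, in spirit, the test-function method that underlies the Zhang--Chen result itself.
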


\begin{proof} If $W_{\psi,\varphi}$ is bounded and conditions (\ref{c_1}) and (\ref{c_2}) hold, then $\psi\in\Bloch(\B_n)$, so by Theorem~\ref{suffcondcomp}, $W_{\psi,\varphi}$ is compact.

Conversely, suppose $W_{\psi,\varphi}$ is compact. Then $W_{\psi,\varphi}$ is bounded and by Theorem~2 in \cite{ZhangChen:09}, condition (\ref{c_1}) holds, and
$$\lim_{\norm{\varphi(z)}\to 1}\mod{\psi(z)}B_\varphi(z)=0.$$
Condition (\ref{c_2}) now follows from the inequality $\theta_\varphi(z)\leq B_\varphi(z)$ for each $z\in\B_n$.
\end{proof}

\subsection{Compactness from $H^\infty(\D^n)$ to $\Bloch(\D^n)$}
In \cite{LiStevic:07}, Li and Stevi\'{c} characterized the compact weighted composition operators from $H^\infty(\D^n)$ into $\Bloch(\D^n)$ in the following result.

\begin{theorem}[\cite{LiStevic:07}, Theorem 1.2]\label{LiStevisPoly} Let $\varphi = (\varphi_1,\dots,\varphi_n)$ be a holomorphic self-map of $\D^n$ and $\psi(z)$ a holomorphic function on $\D^n$.  Then $W_{\psi,\varphi}:H^\infty(\D^n) \to \Bloch(\D^n)$ is compact if and only if the following conditions are satisfied:
\begin{enumerate}
\item[(a)] $W_{\psi,\varphi}:H^\infty(\D^n) \to \Bloch(\D^n)$ is bounded;
\item[(b)] $\displaystyle\lim_{\varphi(z) \to \partial\D^n}\sum_{k=1}^n (1-\mod{z_k}^2)\mod{\frac{\partial\psi}{\partial z_k}(z)} = 0$;
\item[(c)] $\displaystyle\lim_{\varphi(z) \to \partial\D^n}\mod{\psi(z)}\sum_{k,j = 1}^n\mod{\frac{\partial\varphi_j}{\partial z_k}(z)}\frac{1-\mod{z_k}^2}{1-\mod{\varphi_j(z)}^2} = 0.$
\end{enumerate}
\end{theorem}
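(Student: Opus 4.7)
The plan is to prove the equivalence by splitting into the two directions and using the machinery already established in the paper: Theorem~\ref{suffcondcomp} for sufficiency and Lemma~\ref{lemma: Compactness of wco on H^infty} for necessity. The key arithmetical input is that on $\D^n$ the Bergman metric is diagonal,
$$H_z(u,\conj{u}) = \sum_{j=1}^n \frac{\mod{u_j}^2}{(1-\mod{z_j}^2)^2},$$
so by Cauchy--Schwarz with weights,
$$Q_f(z) = \left(\sum_{k=1}^n (1-\mod{z_k}^2)^2 \left|\frac{\partial f}{\partial z_k}(z)\right|^2\right)^{1/2},$$
which is comparable (up to a dimensional constant) to $\sum_k (1-\mod{z_k}^2)|\partial f/\partial z_k(z)|$. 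Hence condition (b) is equivalent to $Q_\psi(z) \to 0$ as $\varphi(z) \to \partial\D^n$.

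For the sufficiency direction, assuming (a), (b), (c), I would estimate $\theta_\varphi(z)$. By the chain rule and the one-variable Schwarz--Pick inequality $(1-\mod{\varphi_j(z)}^2)|\partial f/\partial w_j(\varphi(z))| \leq \supnorm{f}$ applied in each coordinate, for any $f\in H^\infty(\D^n)$ with $\supnorm{f}\le 1$,
$$(1-\mod{z_k}^2)\left|\frac{\partial(f\circ\varphi)}{\partial z_k}(z)\right| \leq \sum_{j=1}^n \left|\frac{\partial\varphi_j}{\partial z_k}(z)\right| \frac{1-\mod{z_k}^2}{1-\mod{\varphi_j(z)}^2}.$$
Summing (or squaring and summing) over $k$ gives the bound
$$\theta_\varphi(z) \leq \sum_{k,j=1}^n \left|\frac{\partial\varphi_j}{\partial z_k}(z)\right|\frac{1-\mod{z_k}^2}{1-\mod{\varphi_j(z)}^2},$$
so (c) yields $\mod{\psi(z)}\theta_\varphi(z)\to 0$ as $\varphi(z)\to\partial\D^n$. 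Together with the equivalence noted above, both hypotheses of Theorem~\ref{suffcondcomp}(a) hold, so $W_{\psi,\varphi}$ is compact.

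For the necessity direction, I would assume $W_{\psi,\varphi}$ is compact (hence bounded, giving (a)) and use Lemma~\ref{lemma: Compactness of wco on H^infty}: for any bounded sequence $\{f_m\}\subset H^\infty(\D^n)$ converging to $0$ locally uniformly, $\blochnorm{\psi(f_m\circ\varphi)}\to 0$. Fix a sequence $\{z^{(m)}\}\subset \D^n$ with $\varphi(z^{(m)})\to\partial\D^n$, and build two families of test functions, each bounded by $1$ and converging locally uniformly to $0$ (this latter property follows because at least one component $\varphi_j(z^{(m)})$ tends to the boundary of $\D$, so the Möbius building blocks escape any compact subset of the factor disk). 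The first family is designed to vanish at $\varphi(z^{(m)})$ (e.g., a product involving factors $(\varphi_j(z^{(m)})-w_j)/(1-\overline{\varphi_j(z^{(m)})}w_j)$), which kills the $f\nabla\psi$ term in the Leibniz expansion (\ref{nabla-rule}) evaluated at $z^{(m)}$; the Bloch-norm evaluation then forces $\mod{\psi(z^{(m)})}Q_{f_m\circ\varphi}(z^{(m)})\to 0$, and choosing the product cleverly so that $Q_{f_m\circ\varphi}(z^{(m)})$ recovers the sum in (c) yields condition (c). The second family is designed to satisfy $f_m(\varphi(z^{(m)}))=1$ and $\nabla(f_m\circ\varphi)(z^{(m)})=0$ (e.g., multiply reproducing-kernel-type factors $(1-\mod{\varphi_j(z^{(m)})}^2)/(1-\overline{\varphi_j(z^{(m)})}w_j)$ by correction terms vanishing to first order at $\varphi(z^{(m)})$), which kills the $\psi\nabla(f\circ\varphi)$ term and leaves $Q_\psi(z^{(m)})\to 0$; an appropriate linear combination separating the coordinate directions recovers (b).

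The main obstacle is the construction of the second family: arranging simultaneously the vanishing of $\nabla(f_m\circ\varphi)(z^{(m)})$, normalization at $\varphi(z^{(m)})$, $\supnorm{f_m}\le 1$, and local uniform convergence to $0$ is delicate in several variables, particularly when only some components of $\varphi(z^{(m)})$ approach $\partial\D$. A secondary point of care is that (b) and (c) are conditions in the limit $\varphi(z)\to\partial\D^n$, which does \emph{not} require $z\to\partial\D^n$; one must therefore be mindful that the chosen sequence $\{z^{(m)}\}$ may lie in a compact subset of $\D^n$ and accordingly design the test functions so that all relevant quantities can be read off at prescribed points.
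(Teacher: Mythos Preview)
The paper does not prove this statement: Theorem~\ref{LiStevisPoly} is quoted verbatim from \cite{LiStevic:07} and used as a black box to deduce Theorem~\ref{compacnesspolydisk}. So there is no proof in the paper to compare your proposal against.

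That said, your sufficiency argument is essentially the content of Theorem~\ref{suffcondcomp}(a) together with Lemma~\ref{theta_inequality} (which you rederive), and it is correct. Your necessity sketch follows the standard test-function strategy and correctly invokes Lemma~\ref{lemma: Compactness of wco on H^infty}, but it has two genuine gaps beyond the one you flag. First, for condition~(c) you claim that a single family $\{f_m\}$ vanishing at $\varphi(z^{(m)})$ can be chosen so that $Q_{f_m\circ\varphi}(z^{(m)})$ ``recovers the sum in (c)''. It cannot: a single $f_m$ gives at best one row of the Jacobian sum (fixed $j$, all $k$), so you must run the argument separately for each coordinate index $j$, with test functions depending only on $w_j$ (e.g.\ $w\mapsto (\varphi_j(z^{(m)})-w_j)/(1-\overline{\varphi_j(z^{(m)})}w_j)$), and then sum. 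Second, and related, such a coordinate-wise test function converges to $0$ locally uniformly only when $|\varphi_j(z^{(m)})|\to 1$, whereas $\varphi(z^{(m)})\to\partial\D^n$ means only that $\max_j|\varphi_j(z^{(m)})|\to 1$; for indices $j$ whose component stays bounded away from $\partial\D$ along the sequence, the corresponding summand in (c) is already controlled by boundedness (condition~(a)), and one must split the argument accordingly. These are the places where the Li--Stevi\'c proof does real work, and your outline does not yet supply them.
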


We will prove the conjecture posed in the previous section in the setting of the polydisk $\D^n$.  To do this, we need the following lemma.

\begin{lemma}\label{theta_inequality} Let $\varphi = (\varphi_1,\dots,\varphi_n)$ be a holomorphic self-map of $\D^n$.  For $z \in \D^n$, $$\mod{\psi(z)}\theta_\varphi(z) \leq \mod{\psi(z)}\sum_{j,k = 1}^n \mod{\frac{\partial\varphi_j}{\partial z_k}(z)}\frac{1-\mod{z_k}^2}{1-\mod{\varphi_j(z)}^2}.$$\end{lemma}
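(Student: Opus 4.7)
The plan is to work directly from the definition of $\theta_\varphi(z)$. Since the factor $\mod{\psi(z)}$ appears on both sides, it is equivalent to prove
$$\theta_\varphi(z) \leq \sum_{j,k=1}^n \mod{\frac{\partial\varphi_j}{\partial z_k}(z)}\frac{1-\mod{z_k}^2}{1-\mod{\varphi_j(z)}^2}$$
at an arbitrary $z \in \D^n$, and then reattach the factor $\mod{\psi(z)}$ at the end.

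Fix $f \in H^\infty(\D^n)$ with $\supnorm{f} \leq 1$ and a nonzero $u \in \C^n$. The first step is to expand via the chain rule:
$$\nabla(f\circ\varphi)(z)u = \sum_{j,k=1}^n \frac{\partial f}{\partial w_j}(\varphi(z))\,\frac{\partial\varphi_j}{\partial z_k}(z)\,u_k,$$
and to bound each partial derivative $\partial f/\partial w_j$ at $\varphi(z)$. The key ingredient is the one-variable Schwarz-Pick inequality: freezing all variables of $f$ but $w_j$ gives a bounded holomorphic function on $\D$ of sup norm at most $1$, hence
$$\mod{\frac{\partial f}{\partial w_j}(\varphi(z))} \leq \frac{1}{1-\mod{\varphi_j(z)}^2}.$$

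The second step is a coordinate-wise estimate on $\mod{u_k}$. From the explicit formula $H_z(u,\conj{u})=\sum_k \mod{u_k}^2/(1-\mod{z_k}^2)^2$ for the polydisk Bergman metric, one reads off $\mod{u_k} \leq (1-\mod{z_k}^2)\,H_z(u,\conj{u})^{1/2}$ for each $k$. Combining these two pointwise estimates with the triangle inequality yields
$$\mod{\nabla(f\circ\varphi)(z)u} \leq H_z(u,\conj{u})^{1/2} \sum_{j,k=1}^n \mod{\frac{\partial\varphi_j}{\partial z_k}(z)}\frac{1-\mod{z_k}^2}{1-\mod{\varphi_j(z)}^2}.$$
Dividing by $H_z(u,\conj{u})^{1/2}$ and taking the supremum over $u\neq 0$ gives a bound on $Q_{f\circ\varphi}(z)$; a further supremum over admissible $f$ produces the claimed bound on $\theta_\varphi(z)$, and multiplying by $\mod{\psi(z)}$ finishes the argument.

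There is no serious obstacle here: the proof is essentially bookkeeping once one identifies the coordinate-wise Schwarz-Pick bound as the natural substitute for a gradient estimate on $f$. The polydisk structure is essential in that its Bergman metric splits as a diagonal sum over coordinates, which is precisely what permits the coordinate-wise handling of both $f$ and $u$.
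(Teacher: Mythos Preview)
Your proof is correct, and it takes a different route from the paper's. The paper first invokes the general inequality $\theta_\varphi(z)\le c_D B_\varphi(z)$ (with $c_{\D^n}=1$), then quotes an explicit formula for $B_\varphi(z)$ on the polydisk from \cite{CohenColonna:08}, and finally applies the triangle inequality to that formula. Your argument bypasses $B_\varphi$ entirely: you work directly from the definition of $\theta_\varphi(z)$, expand $\nabla(f\circ\varphi)(z)u$ by the chain rule, control $|\partial f/\partial w_j(\varphi(z))|$ via the one-variable Schwarz--Pick lemma applied to the $j$th slice function, and control $|u_k|$ using the diagonal form of the polydisk Bergman metric. The net effect is the same bound, but your version is self-contained (no external citation, no need to know $c_{\D^n}=1$) and makes the role of the product structure more transparent. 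The paper's route, on the other hand, highlights that the estimate factors through the purely metric quantity $B_\varphi(z)$, which is conceptually useful elsewhere in the paper.
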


\begin{proof} Observe that by (1.2) of \cite{CohenColonna:08}, for all $z \in \D^n$,
$$\begin{aligned}\mod{\psi(z)}\theta_\varphi(z) &\leq \mod{\psi(z)}B_\varphi(z)\\
& = \mod{\psi(z)}\max_{\norm{w}=1}\left(\sum_{j=1}^n\left|\sum_{k=1}^n\frac{\partial \varphi_j}{\partial z_k}(z)\frac{(1-\mod{z_k}^2)w_k}{1-\mod{\varphi_j(z)}^2}\right|^2\right)^{1/2}\\
&\leq \mod{\psi(z)}\max_{\norm{w}=1}\left(\sum_{j=1}^n\left(\sum_{k=1}^n\left|\frac{\partial \varphi_j}{\partial z_k}(z)\right|\frac{(1-\mod{z_k}^2)\mod{w_k}}{1-\mod{\varphi_j(z)}^2}\right)^2\right)^{1/2}\\
&\leq \mod{\psi(z)}\sum_{k,j=1}^n\left|\frac{\partial \varphi_j}{\partial z_k}(z)\right|\frac{1-\mod{z_k}^2}{1-\mod{\varphi_j(z)}^2}.\qedhere\end{aligned}$$
\end{proof}

\begin{theorem}\label{compacnesspolydisk} Let $\varphi$ be a holomorphic self-map of
$\D^n$ and $\psi \in \Bloch(\D^n)$.  Then $W_{\psi,\varphi}:H^\infty(\D^n)\to\Bloch(\D^n)$ is compact if and only if
\begin{eqnarray} \lim_{\varphi(z) \to \partial \D^n}Q_\psi(z) =0 \hbox{ and } \lim_{\varphi(z)\to\partial \D^n} \mod{\psi(z)}\theta_\varphi(z) = 0.\label{condipoli}\end{eqnarray}
\end{theorem}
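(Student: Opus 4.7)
The plan is to dispatch the two directions separately, relying heavily on results already at hand. The sufficiency direction is immediate: with $\psi \in \Bloch(\D^n)$ and the two limits in (\ref{condipoli}) given, part (a) of Theorem~\ref{suffcondcomp} applied to $D = \D^n$ delivers the compactness of $W_{\psi,\varphi} : H^\infty(\D^n) \to \Bloch(\D^n)$ with no extra work.

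For necessity, assume $W_{\psi,\varphi}$ is compact. The strategy is to invoke the Li-Stevi\'c characterization (Theorem~\ref{LiStevisPoly}), which supplies two limiting conditions (b) and (c), and then to dominate $Q_\psi(z)$ and $\mod{\psi(z)}\theta_\varphi(z)$ by the quantities appearing there. The second limit in (\ref{condipoli}) is nearly free: Lemma~\ref{theta_inequality} already bounds $\mod{\psi(z)}\theta_\varphi(z)$ above by the expression in condition (c) of Theorem~\ref{LiStevisPoly}, so that limit follows at once.

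The real step is to establish
\[
Q_\psi(z) \leq \sum_{k=1}^n (1-\mod{z_k}^2)\left|\frac{\partial \psi}{\partial z_k}(z)\right|,
\]
which, combined with condition (b) of Theorem~\ref{LiStevisPoly}, gives the first limit. Using the explicit polydisk Bergman metric $H_z(u,\conj{u}) = \sum_k \mod{u_k}^2/(1-\mod{z_k}^2)^2$, I would rewrite $\nabla \psi(z) u = \sum_k \bigl[(1-\mod{z_k}^2)\tfrac{\partial \psi}{\partial z_k}(z)\bigr]\bigl[u_k/(1-\mod{z_k}^2)\bigr]$ and apply the Cauchy-Schwarz inequality to obtain
\[
\mod{\nabla \psi(z) u} \leq \Bigl(\sum_{k=1}^n (1-\mod{z_k}^2)^2 \Bigl|\tfrac{\partial \psi}{\partial z_k}(z)\Bigr|^2\Bigr)^{1/2} H_z(u,\conj{u})^{1/2}.
\]
The elementary inequality $(\sum a_k^2)^{1/2} \leq \sum a_k$ for nonnegative $a_k$ then finishes the estimate and yields the required bound on $Q_\psi(z)$.

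I do not anticipate a serious obstacle, since both implications reduce to results stated earlier in the paper. The only point demanding care is aligning the Cauchy-Schwarz factors in the bound for $Q_\psi$ so that the pairing of $(1-\mod{z_k}^2)\tfrac{\partial \psi}{\partial z_k}(z)$ with $u_k/(1-\mod{z_k}^2)$ reproduces the polydisk Bergman metric exactly; this bookkeeping is the main content of the argument.
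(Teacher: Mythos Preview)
Your proposal is correct and follows essentially the same route as the paper: sufficiency via Theorem~\ref{suffcondcomp}(a), and necessity by combining Theorem~\ref{LiStevisPoly} with Lemma~\ref{theta_inequality} and the bound $Q_\psi(z)\leq\sum_{k=1}^n(1-|z_k|^2)\,|\partial\psi/\partial z_k(z)|$. The only cosmetic difference is that the paper obtains this last estimate by citing the exact formula $Q_\psi(z)=\bigl\|\bigl((1-|z_1|^2)\tfrac{\partial\psi}{\partial z_1}(z),\dots,(1-|z_n|^2)\tfrac{\partial\psi}{\partial z_n}(z)\bigr)\bigr\|$ from \cite{CohenColonna:08} and then bounding the $\ell^2$ norm by the $\ell^1$ norm, whereas you derive the same inequality directly via Cauchy--Schwarz; the content is identical.
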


\begin{proof} By Theorem \ref{suffcondcomp}, it suffices to show that if $W_{\psi,\varphi}$ is compact from $H^\infty(\D^n)$ into $\Bloch(\D^n)$, then conditions (\ref{condipoli}) hold.  First, observe that by Theorem 3.3 of \cite{CohenColonna:08}, for $z \in \D^n$, \begin{eqnarray} Q_\psi(z) &=&\norm{\left((1-|z_1|^2)\frac{\partial \psi}{\partial z_1}(z),\dots,(1-|z_n|^2)\frac{\partial \psi}{\partial z_n}(z)\right)}\nonumber\\&\leq &\sum_{k=1}^n (1-\mod{z_k}^2)\mod{\frac{\partial\psi}{\partial z_k}(z)}.\nonumber\end{eqnarray}  Since $W_{\psi,\varphi}$ is compact, Theorem \ref{LiStevisPoly}(b) implies that $$\lim_{\varphi(z) \to \partial\D^n} Q_\psi(z) = 0.$$

In addition, by Lemma \ref{theta_inequality}, we have
$$\mod{\psi(z)}\theta_\varphi(z) \leq \mod{\psi(z)}\sum_{k,j = 1}^n\mod{\frac{\partial\varphi_j}{\partial z_k}(z)}\frac{1-\mod{z_k}^2}{1-\mod{\varphi_j(z)}^2}$$ for all $z \in \D^n$.  Thus, by part (c) of Theorem~\ref{LiStevisPoly}, we obtain $$\lim_{\varphi(z) \to \partial\D^n} \mod{\psi(z)}\theta_\varphi(z) = 0,$$ completing the proof.
\end{proof}

\section{Component Operators}
In this section, we look at the issues of boundedness and compactness of the multiplication and the composition operators separately.

\subsection{Multiplication Operators from $H^\infty$ into the Bloch Space}
Let us now consider the implications of Theorem~\ref{ACbhd} for the case when $\varphi$ is the identity and $D$ is a bounded symmetric domain.

\begin{theorem}\label{multchar} Let $D$ be a bounded symmetric domain in standard form and let $\psi\in H(D)$. Then
\begin{enumerate}
\item[(a)] $M_{\psi}:H^\infty(D)\to\Bloch(D)$ is bounded if and only if $\psi\in H^\infty(D)$.
\item[(b)] $M_{\psi}:H^\infty(D)\to\Bloch_{0*}(D)$ is bounded if and only if $\psi\in\Bloch_{0*}(D)\cap H^\infty(D)$.\end{enumerate}
Furthermore, if $M_{\psi}$ is bounded as an operator from $H^\infty(D)$ into the Bloch space or the $*$-little Bloch space, then
\begin{eqnarray} \max\{\blochnorm{\psi},c_D\supnorm{\psi}\} \leq \norm{M_{\psi}}\leq \blochnorm{\psi}+c_D\supnorm{\psi}, \nonumber\end{eqnarray}
where $c_D$ is the Bloch constant of $D$.
\end{theorem}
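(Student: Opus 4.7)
The plan is to derive Theorem~\ref{multchar} from Theorem~\ref{ACbhd} applied with $\varphi=\mathrm{id}$, supplemented by a direct argument for the sharper lower estimate $c_D\supnorm{\psi}\leq\norm{M_\psi}$.

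The first step is to evaluate $\theta_{\mathrm{id}}$ explicitly. For each $z\in D$, homogeneity of $D$ furnishes $T\in\Aut(D)$ with $T(0)=z$. The invariance of the Bergman metric under automorphisms forces $B_T\equiv 1$, and a change of variables $v=JT(z)u$ in the defining supremum shows that the inequality $Q_{f\circ T}(0)\leq B_T(0)Q_f(T(0))$ preceding (\ref{inequality:B_varphi}) is in fact an equality. Thus $Q_{f\circ T}(0)=Q_f(z)$, and since $f\mapsto f\circ T$ is a norm-preserving bijection of the closed unit ball of $H^\infty(D)$, one obtains $\theta_{\mathrm{id}}(z)=\theta_{\mathrm{id}}(0)$ for every $z\in D$. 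By the definition of the Bloch constant, this common value equals $c_D$, so $\theta_{\psi,\mathrm{id}}=c_D\supnorm{\psi}$. With this identification, parts (a) and (b), together with the inclusion $H^\infty(D)\subset\Bloch(D)$ from Timoney's estimate recorded in Section~2, are direct applications of Theorem~\ref{ACbhd}; the upper estimate $\norm{M_\psi}\leq\blochnorm{\psi}+c_D\supnorm{\psi}$ and the lower estimate $\blochnorm{\psi}=\blochnorm{M_\psi 1}\leq\norm{M_\psi}$ are also inherited directly.

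The substantive step is the remaining lower bound $c_D\supnorm{\psi}\leq\norm{M_\psi}$. Fix $z_0\in D$ with $\psi(z_0)\neq 0$ (otherwise $\psi\equiv 0$ and the bound is trivial) and $\varepsilon>0$. Since $\theta_{\mathrm{id}}(z_0)=c_D$, there is $g\in H^\infty(D)$ with $\supnorm{g}\leq 1$ and $Q_g(z_0)>c_D-\varepsilon$; the maximum modulus principle forces $\alpha:=g(z_0)$ to satisfy $\mod{\alpha}<1$, since $Q_g(z_0)>0$ rules out $g$ being constant. Composing $g$ with the disk automorphism $w\mapsto(w-\alpha)/(1-\conj{\alpha}w)$ sending $\alpha$ to $0$ produces
$$f(z)=\frac{g(z)-\alpha}{1-\conj{\alpha}g(z)},$$
which lies in $H^\infty(D)$ with $\supnorm{f}\leq 1$ and $f(z_0)=0$. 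The chain rule yields $\nabla f(z_0)=(1-\mod{\alpha}^2)^{-1}\nabla g(z_0)$, so $Q_f(z_0)=Q_g(z_0)/(1-\mod{\alpha}^2)\geq c_D-\varepsilon$. Because $f(z_0)=0$, the product rule gives $\nabla(\psi f)(z_0)=\psi(z_0)\nabla f(z_0)$, whence $Q_{\psi f}(z_0)=\mod{\psi(z_0)}Q_f(z_0)$. Consequently
$$\norm{M_\psi}\geq\blochnorm{\psi f}\geq Q_{\psi f}(z_0)>\mod{\psi(z_0)}(c_D-\varepsilon),$$
and taking the supremum over $z_0\in D$ and then letting $\varepsilon\to 0^+$ delivers $c_D\supnorm{\psi}\leq\norm{M_\psi}$.

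The main obstacle is this last step: producing, at an arbitrary interior point $z_0$, a test function in the unit ball of $H^\infty(D)$ that both vanishes at $z_0$ and nearly saturates the Bloch constant there. This cannot be read off from Theorem~\ref{ACbhd} alone, but the M\"obius-type device in the range disk, analogous to the trick used in the proof of Theorem~\ref{betterestimates} for $D=\D$, delivers it cleanly once a near-extremal $g$ for the supremum $\theta_{\mathrm{id}}(z_0)=c_D$ is in hand.
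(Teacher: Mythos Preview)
Your argument is correct. The skeleton---specializing Theorem~\ref{ACbhd} to $\varphi=\mathrm{id}$ and identifying $\theta_{\psi,\mathrm{id}}=c_D\supnorm{\psi}$---matches the paper, but your derivation of the lower bound $c_D\supnorm{\psi}\leq\norm{M_\psi}$ takes a genuinely different route. The paper invokes the extremal polynomial $p$ (with $p(0)=0$, $\supnorm{p}=1$, $Q_p(0)=c_D$) supplied by the structural results of \cite{CohenColonna:94} and \cite{Zhang:97}, and then \emph{pre}-composes it with a domain automorphism $S$ sending $a$ to $0$; the resulting $g=p\circ S$ satisfies $g(a)=0$ and $Q_g(a)=c_D$ exactly, so one reads off $\norm{M_\psi g}_\Bloch\geq Q_{\psi g}(a)=\mod{\psi(a)}c_D$ with no approximation. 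You instead take an arbitrary near-extremal $g$ for the supremum $\theta_{\mathrm{id}}(z_0)=c_D$ and \emph{post}-compose with a disk M\"obius transformation in the range to force $f(z_0)=0$, which can only increase $Q_f(z_0)$. Your approach is more elementary in that it uses nothing beyond the definition of $c_D$ as a supremum together with the Schwarz--Pick-type device already deployed in Theorem~\ref{betterestimates}; in particular it does not rely on the fact that the supremum defining $c_D$ is actually attained. The paper's approach, on the other hand, avoids the $\varepsilon$ and produces a test function $g=p\circ S$ lying in $\Bloch_{0*}(D)$, which ties the estimate more explicitly to part~(b). (One minor slip: your change of variables should read $v=JT(0)u$, not $v=JT(z)u$.)
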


\begin{proof}
Suppose first $D$ is an irreducible domain. In \cite{CohenColonna:94} and \cite{Zhang:97}, it was shown that there exists a polynomial $p$ on $D$ and $\xi\in\partial D$ such that $p(0)=0$, $\supnorm{p}=1$, $|\nabla{p}(0)\xi|=1$, and \begin{eqnarray}Q_p(0)=\frac1{H_0(\xi,\overline{\xi})^{1/2}}=c_D.\label{qzero}\end{eqnarray} If $D=D_1\times \dots \times D_k$, a product of irreducible domains, then by (\ref{cdformula}), there exists $j=1,\dots,k$ such that $c_D=c_{D_j}$, and so there exists a polynomial $p_j$ on $D_j$ and $\xi_j\in\partial D_j$ such that $\supnorm{p_j}=1$, $|\nabla(p_j)(0)\xi_j|=1$, and $Q_{p_j}(0)=c_D$. Then letting $p$ be the polynomial on $D$ such that $p(z)=p_j(z_j)$ (where $z_j$ denotes the component of $z$ in $D_j$), and  $\xi$ the vector whose component in $D_j$ equals $\xi_j$ and whose components in each irreducible factor other than $D_j$ are 0, we obtain a function on $D$ with supremum norm 1 and a vector in $\partial D$ such that $|\nabla(p)(0)\xi|=|\nabla(p_j)(0)\xi_j|=1$, $p(0)=0$, and $Q_p(0)=Q_{p_j}(0)=c_D$.

Fix $a\in D$ and let $S\in\hbox{Aut}(D)$ be such that $S(a)=0$. Since the Jacobian matrix $JS(a)$ is invertible, there exists a nonzero $v\in\C^n$ such that $JS(a)v=\xi$. Composing $p$ with $S$, we obtain a function $g\in \Bloch_{0*}(D)$ such that $\supnorm{g}=1$ and $Q_g(a)=c_D$. In particular, $$\theta_{\psi,\mathrm{id}}=\sup_{a\in D}|\psi(a)|\sup\{Q_f(a):\norm
{f}_\infty\le 1\}=c_D\norm{\psi}_\infty.$$ Moreover, by the boundedness of $M_\psi$, the invariance of the Bergman metric under the action of $\hbox{Aut}(D)$, recalling that $|\nabla p(0)\xi|=1$ and using (\ref{qzero}), we obtain
\begin{eqnarray}\norm{M_\psi g}_\Bloch&\geq & Q_{\psi g}(a)= \sup_{u\in\C^n\backslash\{0\}}\frac{|\nabla(\psi g)(a)u|}{H_a(u,\overline{u})^{1/2}}\nonumber\\
&=& \sup_{u\in\C^n\backslash\{0\}}\frac{|\psi(a)\nabla(p)(0)JS(a)u|}{H_{0}(JS(a)u,\overline{JS(a)u})^{1/2}}\nonumber\\
&\geq & |\psi(a)|\frac{|\nabla(p)(0)\xi|}{H_0(\xi,\overline{\xi})^{1/2}}=|\psi(a)|Q_p(0)\nonumber\\&=& |\psi(a)|c_D.\nonumber\end{eqnarray}
Taking the supremum over all $a\in D$, we deduce $\norm{M_\psi}\geq c_D\supnorm{\psi}.$ The result now follows at once from Theorem~\ref{ACbhd}.\end{proof}

\begin{theorem}[\cite{CohenColonna:94},\cite{Zhang:97}]\label{cd} If $D$ is an irreducible bounded symmetric domain in $\C^n$, then
\begin{eqnarray}\nonumber c_D=\begin{cases} \sqrt{\frac{2}{m+n}} &\hbox{ if }D\hbox{ is of type }R_I,\\
\sqrt{\frac{2}{n+1}} &\hbox{ if }D \hbox{ is of type }R_{II},\\
\frac{1}{\sqrt{n-1}} &\hbox{ if }D \hbox{ is of type }R_{III},\\
\sqrt{\frac{2}{n}} &\hbox{ if }D \hbox{ is of type }R_{IV},\\
\frac{1}{\sqrt{6}} &\hbox{ if }D =R_{V},\\
\frac{1}{3} &\hbox{ if }D =R_{VI}.\end{cases}\end{eqnarray}
\end{theorem}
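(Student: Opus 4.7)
The plan is to invoke the characterization proved in \cite{CohenColonna:94} (restated just before Theorem~\ref{multchar}), namely
$$c_D = \frac{1}{\inf_{\xi\in\partial D} H_0(\xi,\overline{\xi})^{1/2}},$$
and then evaluate this infimum case by case over the six irreducible types. Since extremality is preserved under the action of $\Aut(D)$ and $\Aut(D)$ acts transitively on a set of distinguished boundary vectors at $0$ (the minimal tripotents, in Jordan-theoretic language), it suffices to compute $H_0(\xi,\overline{\xi})$ at one carefully chosen representative on $\partial D$ in each type.

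For the four classical families, each domain is a matrix ball whose Bergman kernel takes the closed form $K(Z,Z) = c\,[\det(I - ZZ^*)]^{-g}$ for $R_I,R_{II},R_{III}$ (with genus $g = m+n$, $n+1$, $n-1$, respectively) and an analogous Cayley-type expression for the Lie ball $R_{IV}$. Taking $\partial^2 \log K / \partial Z\,\partial\overline{Z}$ at $Z = 0$ reduces $H_0(U,\overline U)$ to a simple trace form in the entries of $U$, weighted by the genus. The boundary consists of matrices with largest singular value equal to $1$ (respectively, vectors satisfying the analogous Lie-ball equality), so the minimum of $H_0(\xi,\overline\xi)^{1/2}$ over $\partial D$ is attained on an element of lowest possible rank: rank one for $R_I$ and $R_{II}$, rank two for $R_{III}$ (since antisymmetric matrices have even rank), and a null vector of the quadratic form $\sum z_k^2$ of minimal Euclidean norm for $R_{IV}$. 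Plugging these extremizers into the trace form produces $2/(m+n)$, $2/(n+1)$, $1/(n-1)$, and $2/n$, whose reciprocal square roots are the stated values.

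For the exceptional domains $R_V$ and $R_{VI}$, which have no matrix realization, one proceeds via the Jordan triple product carried by $\C^{16}$ and $\C^{27}$, respectively, arising from $1\times 2$ and $3\times 3$ Hermitian matrices over the complex octonions. The Bergman metric at $0$ is again a Hermitian form whose normalization is controlled by the genus of the domain, and an explicit minimal tripotent provides the extremal boundary vector; Zhang carries out this computation in \cite{Zhang:97}, obtaining $1/\sqrt{6}$ and $1/3$.

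The main obstacle is the exceptional cases: without a convenient matrix realization one must invoke either the Jordan-algebraic structure of the underlying Hermitian positive Jordan triple system (to identify the minimal tripotents explicitly and compute the trace form) or, alternatively, the known values of the genus and rank together with a reduction to a polydisk slice spanned by a frame of orthogonal minimal tripotents. For the four classical types the proof is essentially a bookkeeping exercise in linear algebra; the exceptional types account for the bulk of the technical content and justify the separate citation to \cite{Zhang:97}.
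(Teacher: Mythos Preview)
The paper does not prove this theorem at all; it is quoted as a result from \cite{CohenColonna:94} (the four classical types) and \cite{Zhang:97} (the two exceptional types), so there is no in-paper argument to compare against. Your outline is a reasonable summary of how those references proceed: both rest on the identity $c_D = 1/\inf_{\xi\in\partial D}H_0(\xi,\overline\xi)^{1/2}$ and locate the infimum at a minimal tripotent (rank one for $R_I$ and $R_{II}$, rank two for $R_{III}$, an isotropic vector for $R_{IV}$, and a Jordan-algebraic minimal tripotent for $R_V$ and $R_{VI}$).

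Two corrections to your write-up. First, it is the isotropy subgroup of $\Aut(D)$ at $0$, not the full automorphism group, that acts linearly on $\partial D$ and preserves $H_0$; the reduction to a single representative uses transitivity of \emph{that} group on the minimal tripotents. Second, your numerics are inverted: the minimum of $H_0(\xi,\overline\xi)$ over $\partial D$ equals $(m+n)/2$, $(n+1)/2$, $n-1$, and $n/2$ in the four classical cases (for instance, for $\B_n$, which is $R_I$ with second index $1$, the paper's own formula gives $H_0(\xi,\overline\xi)=\frac{n+1}{2}\norm{\xi}^2=\frac{n+1}{2}$ on the unit sphere). The stated values of $c_D$ are the reciprocal square roots of \emph{those} numbers, not of $2/(m+n)$, $2/(n+1)$, $1/(n-1)$, $2/n$ as you wrote. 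Apart from these slips the plan is sound.
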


	Recalling that the Bloch seminorm of a bounded analytic function is no greater than its supremum norm and observing from Theorem~\ref{cd} that $c_D\le 1$ for any bounded symmetric domain $D$ in standard form, and the only domains $D$ for which $c_D=1$ are those which contain the unit disk as a factor, we obtain the following result.

\begin{corollary} Let $D$ be a bounded symmetric domain in standard form with $\D$ as a factor and let $\psi\in H(D)$. If $M_\psi$ is bounded from $H^\infty(D)$ into either $\Bloch(D)$ or $\Bloch(D)_{0*}$, then $$\max\{\blochnorm{\psi},\supnorm{\psi}\} \leq \norm{M_\psi} \leq \blochnorm{\psi} + \supnorm{\psi}.$$  Furthermore, if $\psi(0) = 0$, then $$\supnorm{\psi} \leq \norm{M_\psi} \leq \blochnorm{\psi} + \supnorm{\psi}.$$
\end{corollary}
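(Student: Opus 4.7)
The plan is to derive this Corollary as a direct consequence of Theorem~\ref{multchar}, with the only real work being the identification $c_D = 1$ under the hypothesis that $\D$ is a factor of $D$.

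First I would verify that $c_D = 1$. The unit disk is the Cartan classical domain $R_I$ with $m = n = 1$, so Theorem~\ref{cd} gives $c_\D = \sqrt{2/(1+1)} = 1$. A quick inspection of the remaining entries in the table of Theorem~\ref{cd} shows that every irreducible bounded symmetric domain $D_j$ satisfies $c_{D_j} \leq 1$, with equality if and only if $D_j = \D$. Since $D = D_1 \times \cdots \times D_k$ has $\D$ among its factors, formula~(\ref{cdformula}) yields
$$c_D = \max_{1 \leq j \leq k} c_{D_j} = 1.$$

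Next I would substitute $c_D = 1$ into the estimate of Theorem~\ref{multchar}. Because $M_\psi$ is assumed bounded from $H^\infty(D)$ into $\Bloch(D)$ or $\Bloch_{0*}(D)$, that theorem gives
$$\max\{\blochnorm{\psi}, c_D\supnorm{\psi}\} \leq \norm{M_\psi} \leq \blochnorm{\psi} + c_D\supnorm{\psi},$$
which collapses to the first claimed estimate.

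For the second statement, assume in addition that $\psi(0) = 0$. Then $\blochnorm{\psi} = |\psi(0)| + \beta_\psi = \beta_\psi$. By Theorem~\ref{multchar}(a), boundedness of $M_\psi$ forces $\psi \in H^\infty(D)$, so the definition of the Bloch constant gives $\beta_\psi \leq c_D \supnorm{\psi} = \supnorm{\psi}$. Consequently $\max\{\blochnorm{\psi}, \supnorm{\psi}\} = \supnorm{\psi}$, and the first part specializes to $\supnorm{\psi} \leq \norm{M_\psi} \leq \blochnorm{\psi} + \supnorm{\psi}$.

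There is no serious obstacle in this argument; it is essentially a substitution step, with the only verification being that having $\D$ as a factor forces $c_D = 1$, which is read off directly from Theorem~\ref{cd} and the product formula~(\ref{cdformula}).
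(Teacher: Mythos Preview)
Your proposal is correct and follows essentially the same route as the paper: the corollary is stated immediately after the sentence noting that $\beta_f\le\supnorm{f}$ for bounded $f$ and that, by Theorem~\ref{cd} and (\ref{cdformula}), $c_D\le 1$ with equality exactly when $\D$ is a factor, so the estimates of Theorem~\ref{multchar} specialize directly. Your handling of the $\psi(0)=0$ case via $\blochnorm{\psi}=\beta_\psi\le c_D\supnorm{\psi}=\supnorm{\psi}$ is exactly the intended reduction.
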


	We now characterize the compact multiplication operators from $H^\infty$ to $\Bloch$ when the underlying space is the ball or the polydisk.

\begin{theorem}\label{multcompactness} For $D=\B_n$ or $\D^n$, the following statements are equivalent:
\begin{enumerate}
\item[(a)] $M_{\psi}:H^\infty(D)\to\Bloch(D)$ is compact.
\item[(b)] $\psi$ is identically zero.\end{enumerate}
 \end{theorem}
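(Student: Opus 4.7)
The implication (b)$\Rightarrow$(a) is immediate. For (a)$\Rightarrow$(b), my plan is to combine the compactness characterizations obtained earlier with the invariance of the Bergman metric and a maximum-modulus argument.

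Assume $M_\psi=W_{\psi,\mathrm{id}}$ is compact. Since compactness implies boundedness, Theorem~\ref{multchar}(a) gives $\psi\in H^\infty(D)$. Applying Theorem~\ref{compactnessball} when $D=\B_n$ and Theorem~\ref{compacnesspolydisk} when $D=\D^n$, with $\varphi=\mathrm{id}$, I obtain in either case
\begin{equation}\nonumber
\lim_{z\to\partial D}\,|\psi(z)|\,\theta_{\mathrm{id}}(z)=0.
\end{equation}

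The next step is to show that $\theta_{\mathrm{id}}$ is a positive constant on $D$, in fact equal to $c_D$. Given $z\in D$, by homogeneity I pick $\sigma\in\Aut(D)$ with $\sigma(0)=z$. Using the chain rule $\nabla(f\circ\sigma)(0)=\nabla f(z)\,J\sigma(0)$ together with the invariance identity $H_0(u,\conj{u})=H_z(J\sigma(0)u,\conj{J\sigma(0)u})$ and the change of variables $v=J\sigma(0)u$, I get $Q_{f\circ\sigma}(0)=Q_f(z)$ for every $f\in H(D)$. Since $f\mapsto f\circ\sigma$ is a supnorm-preserving bijection of the unit ball of $H^\infty(D)$, this yields $\theta_{\mathrm{id}}(z)=\theta_{\mathrm{id}}(0)$. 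As $z$ is arbitrary, $\theta_{\mathrm{id}}$ is constant, and taking the supremum in $z$ in the definition of $c_D$ identifies this constant with $c_D$, which is strictly positive for both $\B_n$ and $\D^n$ by Theorem~\ref{cd}.

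Therefore the displayed limit reduces to $\lim_{z\to\partial D}|\psi(z)|=0$. Since $\psi\in H^\infty(D)$ and $D$ is bounded, a standard maximum-modulus argument concludes: if $M=\sup_D|\psi|>0$, take $z_k\in D$ with $|\psi(z_k)|\to M$; passing to a subsequence convergent in $\conj{D}$, the boundary behavior rules out a limit in $\partial D$, so $|\psi|$ attains its supremum at an interior point, and the maximum modulus principle forces $\psi$ constant, which together with the vanishing at $\partial D$ gives $\psi\equiv 0$, contradicting $M>0$. Hence $\psi\equiv 0$.

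The main obstacle is the identification $\theta_{\mathrm{id}}\equiv c_D$; once that positive lower bound is in hand, the theorem is essentially forced by the compactness criterion and the maximum principle.
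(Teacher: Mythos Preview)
Your proof is correct and follows essentially the same route as the paper: invoke the compactness characterizations (Theorems~\ref{compactnessball} and \ref{compacnesspolydisk}) with $\varphi=\mathrm{id}$, use that $\theta_{\mathrm{id}}(z)\equiv c_D>0$ to reduce to $\lim_{z\to\partial D}|\psi(z)|=0$, and then conclude $\psi\equiv 0$. The only difference is presentational: the paper states $\sup\{Q_f(z):\supnorm{f}\le 1\}=c_D$ as a fact (it was effectively established in the proof of Theorem~\ref{multchar}), whereas you re-derive it via the $\Aut(D)$-invariance argument, and you also spell out the maximum-modulus step that the paper leaves implicit.
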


\begin{proof} $(a)\Longrightarrow (b)$: 
By Theorems~\ref{compactnessball} and \ref{compacnesspolydisk}, the compactness of $M_\psi$ implies that 
$$\lim_{z\to\partial D}|\psi(z)|\sup\{Q_f(z): \supnorm{f}\leq 1\}=0.$$
Since $\sup\{Q_f(z): \supnorm{f}\leq 1\}=c_D$, it follows that $\lim\limits_{z\to\partial D}|\psi(z)|=0$, hence $\psi$ is identically 0.

 $(b)\Longrightarrow (a)$ is obvious.
\end{proof}

Using Corollary~\ref{charcomplilBloch}, we deduce that there are no nontrivial bounded multiplication operators from $H^\infty(\B_n)$ to $\lilBloch(\B_n)$.

\begin{corollary}\label{littlecompactness} The following statements are equivalent:
\begin{enumerate}
\item[(a)] $M_{\psi}:H^\infty(\B_n)\to\lilBloch(\B_n)$ is bounded.
\item[(b)] $M_{\psi}:H^\infty(\B_n)\to\lilBloch(\B_n)$ is compact.
\item[(c)] $\psi$ is identically zero.\end{enumerate}\end{corollary}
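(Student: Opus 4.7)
The plan is to reduce the corollary to Corollary~\ref{charcomplilBloch} applied with $\varphi$ equal to the identity self-map of $\B_n$. The implications $(c) \Rightarrow (b) \Rightarrow (a)$ are automatic, since the zero operator is compact and every compact operator is bounded, so the only content is $(a) \Rightarrow (c)$.

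First I would observe that for $\varphi = \mathrm{id}$ the quantity $\theta_{\mathrm{id}}(z)$ is nothing other than $\sup\{Q_f(z) : f \in H^\infty(\B_n),\ \supnorm{f}\le 1\}$. By the homogeneity argument that appears in the proof of Theorem~\ref{multchar} (given a point $a \in \B_n$, pre-composing the extremal polynomial $p$ on $\B_n$ with $S \in \Aut(\B_n)$ satisfying $S(a)=0$ and invoking invariance of the Bergman metric), this supremum is constant in $z$ and equals $c_{\B_n}$. In particular $\theta_{\mathrm{id}}(z) = c_{\B_n} > 0$ for every $z \in \B_n$.

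Next, assuming $(a)$, Corollary~\ref{charcomplilBloch} forces
\[
\lim_{\norm{z}\to 1}\mod{\psi(z)}\theta_{\mathrm{id}}(z) = 0.
\]
Dividing by the positive constant $c_{\B_n}$ gives $\lim_{\norm{z}\to 1}\mod{\psi(z)} = 0$. Thus $\psi$ extends continuously to $\overline{\B_n}$ with boundary values identically $0$, and the maximum modulus principle yields $\psi \equiv 0$ on $\B_n$, establishing $(c)$.

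There is no real obstacle here; the only substantive step is the identification $\theta_{\mathrm{id}}(z) \equiv c_{\B_n}$, which was essentially carried out already in the proof of Theorem~\ref{multchar} and could be cited rather than redone. The remainder is a one-line application of the maximum principle.
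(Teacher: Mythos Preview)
Your proposal is correct and follows essentially the same approach as the paper. The paper's proof consists only of the sentence preceding the corollary (``Using Corollary~\ref{charcomplilBloch}, we deduce\ldots''), and your argument spells out precisely the details the paper leaves implicit---applying Corollary~\ref{charcomplilBloch} with $\varphi=\mathrm{id}$, identifying $\theta_{\mathrm{id}}(z)\equiv c_{\B_n}$ (exactly as in the proof of Theorem~\ref{multchar}), and concluding via the maximum principle, which is the same pattern the paper uses in the proof of Theorem~\ref{multcompactness}.
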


We next look at the case when $\psi$ is identically 1, that is, the weighted composition operator reduces to the composition operator from $H^\infty(D)$ into $\Bloch(D)$.

\subsection{Composition Operators from $H^\infty$ into the Bloch Space}

From (\ref{inequality:B_varphi}) we deduce that for any holomorphic self-map of a bounded homogeneous domain $D$, the supremum $\theta_\varphi$ of $\theta_\varphi(z)$, over all $z\in D$, is finite. Indeed, $\theta_\varphi\leq c_DB_\varphi$. Thus, Theorem~\ref{ACbhd} yields the following result.

\begin{corollary}\label{compchar} Let $D$ be a bounded homogeneous domain and let $\varphi$ be a holomorphic self-map of $D$. Then
\begin{enumerate}
\item[(a)] $C_{\varphi}:H^\infty(D)\to\Bloch(D)$ is bounded.
\item[(b)] $C_{\varphi}:H^\infty(D)\to\Bloch_{0*}(D)$ is bounded if and only if $$\lim_{z \to \partial^* D} \theta_\varphi(z) = 0.$$\end{enumerate}
Furthermore, if $C_{\varphi}$ is bounded as an operator into $\Bloch(D)$ or $\Bloch_{0*}(D)$, then
$$1 \leq \norm{C_{\varphi}}\leq 1 + \theta_\varphi.$$
\end{corollary}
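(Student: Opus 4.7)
The plan is to specialize Theorem~\ref{ACbhd} to the case $\psi\equiv 1$ and combine it with the observation made just before Theorem~\ref{ACbhd} that $\theta_\varphi(z)\le c_D B_\varphi(z)$, together with Timoney's fact that $B_\varphi=\sup_{z\in D}B_\varphi(z)$ is bounded above by a constant depending only on $D$. The constant function $\psi\equiv 1$ satisfies $\nabla\psi\equiv 0$, so $Q_\psi\equiv 0$, hence $\blochnorm{\psi}=|\psi(0)|=1$ and trivially $\psi\in\Bloch_{0*}(D)$ since $\lim_{z\to\partial^*D}Q_\psi(z)=0$. Moreover, $\theta_{1,\varphi}=\sup_{z\in D}\theta_\varphi(z)=\theta_\varphi$, and by the inequality (\ref{inequality:B_varphi}) we get $\theta_\varphi\le c_D B_\varphi<\infty$; so the hypothesis on finiteness of $\theta_{\psi,\varphi}$ in Theorem~\ref{ACbhd}(a) is automatic.

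For part (a), with the above verifications I would invoke Theorem~\ref{ACbhd}(a) directly to conclude that $C_\varphi=W_{1,\varphi}:H^\infty(D)\to\Bloch(D)$ is bounded without any extra hypothesis on $\varphi$. For part (b), I would apply Theorem~\ref{ACbhd}(b) with $\psi\equiv 1$: the requirement $\psi\in\Bloch_{0*}(D)$ is automatic, the finiteness of $\theta_{\psi,\varphi}$ is automatic as above, and the remaining condition (\ref{zero lim}) reads $\lim_{z\to\partial^*D}|1|\theta_\varphi(z)=\lim_{z\to\partial^*D}\theta_\varphi(z)=0$, which is exactly the condition asserted in (b). Both directions of the equivalence therefore drop out of Theorem~\ref{ACbhd}(b).

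For the norm bounds, substituting $\psi\equiv 1$ into the estimate (\ref{estim}) gives $\blochnorm{1}\le \|C_\varphi\|\le \blochnorm{1}+\theta_{1,\varphi}$, which collapses to $1\le \|C_\varphi\|\le 1+\theta_\varphi$, as claimed. No further argument is needed.

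There is essentially no obstacle here: the whole statement is a clean specialization of Theorem~\ref{ACbhd}, and the only non-trivial ingredient beyond that theorem is the earlier observation that $\theta_\varphi$ is automatically finite on a bounded homogeneous domain, which guarantees that the boundedness of $C_\varphi$ into $\Bloch(D)$ requires no hypothesis on $\varphi$.
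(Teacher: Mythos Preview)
Your proposal is correct and matches the paper's approach exactly: the paper derives this corollary directly from Theorem~\ref{ACbhd} after observing (via inequality~(\ref{inequality:B_varphi}) and Timoney's bound on $B_\varphi$) that $\theta_\varphi\le c_D B_\varphi<\infty$, and then specializing to $\psi\equiv 1$. Your write-up simply spells out the details of that specialization.
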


\begin{remark} By Corollary~\ref{charcomplilBloch}, all bounded composition operators from $H^\infty(\D)$ to $\lilBloch(\D)$ are compact and the corresponding symbol $\varphi$ must satisfy the condition \begin{equation}\lim_{|z|\to 1}\frac{(1-\mod{z}^2)\mod{\varphi'(z)}}{1-\mod{\varphi(z)}^2}=0.\label{zerolimit}\end{equation}
Besides the symbols whose range is relatively compact in $\D$, examples of symbols satisfying (\ref{zerolimit}) include the functions of the form
$\varphi(z)=\left(\frac{1-\lambda z}2\right)^b$, where $\mod{\lambda}=1$ and $0<b<1$. \end{remark}

\section{Isometries}
\subsection{Isometric multiplication operators} In \cite{AllenColonna:08} and \cite{AllenColonna:08-I}, we proved that the only isometric multiplication operators from the Bloch space of the unit disk or of a bounded symmetric domain that does not have the disk as a factor to itself are those induced by constant functions of modulus one. We now show that there are no isometric multiplication operators from $H^\infty(\D)$ into the Bloch space $\Bloch(\D)$ (and hence into the little Bloch space as well).

\begin{lemma}\label{lemma:no_fix_origin} If $M_\psi:H^\infty(\D) \to \Bloch(\D)$ is an isometry, then $\psi(0)\neq 0$.
\end{lemma}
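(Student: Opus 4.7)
The plan is to proceed by contradiction: assume $M_\psi:H^\infty(\D)\to\Bloch(\D)$ is an isometry with $\psi(0)=0$, and derive a contradiction by feeding $\psi$ itself into $M_\psi$.

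I first calibrate $\psi$. Applying the isometry to $f\equiv 1$ yields $\blochnorm{\psi}=\supnorm{1}=1$, so from $\psi(0)=0$ we get $\beta_\psi=1$. On the other hand, Theorem~\ref{betterestimates} applied with $\varphi=\mathrm{id}$ gives $\theta_{\psi,\mathrm{id}}\leq\norm{M_\psi}=1$, and the explicit formula for $\theta_{\psi,\varphi}$ on the disk reduces (for $\varphi=\mathrm{id}$) to $\theta_{\psi,\mathrm{id}}=\supnorm{\psi}$; hence $\supnorm{\psi}\leq 1$. Schwarz--Pick on the disk yields $\beta_\psi\leq\supnorm{\psi}$, so both inequalities collapse to $\supnorm{\psi}=1$, and since $\psi$ vanishes at the origin but is not identically zero, the maximum modulus principle makes $\mod{\psi(z)}<1$ for every $z\in\D$; in particular, $\psi$ is a genuine self-map of $\D$.

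The contradiction now follows by taking $f=\psi$: since $\psi^2(0)=0$,
$$\blochnorm{M_\psi\psi}=\blochnorm{\psi^2}=2\sup_{z\in\D}\mod{\psi(z)}\,(1-\mod{z}^2)\mod{\psi'(z)}.$$
Schwarz--Pick for $\psi:\D\to\D$ gives $(1-\mod{z}^2)\mod{\psi'(z)}\leq 1-\mod{\psi(z)}^2$, so the right-hand side is bounded above by $2\sup_{t\in[0,1]}t(1-t^2)=4\sqrt{3}/9$, strictly less than $1$. This contradicts the isometry identity $\blochnorm{M_\psi\psi}=\supnorm{\psi}=1$, forcing $\psi(0)\neq 0$. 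The delicate step is the calibration $\supnorm{\psi}=1$, which is what makes the elementary numerical inequality $4\sqrt{3}/9<1$ decisive; once that is in hand, the Schwarz--Pick application is immediate.
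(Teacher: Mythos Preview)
Your proof is correct and follows essentially the same route as the paper's: both argue by contradiction from $\psi(0)=0$, establish the calibration $\beta_\psi=\supnorm{\psi}=1$, then feed $\psi$ into $M_\psi$ and use Schwarz--Pick to bound $\beta_{\psi^2}\leq 4/(3\sqrt{3})<1$. The only difference is in how $\supnorm{\psi}\leq 1$ is obtained: the paper tests against the M\"obius maps $L_a(z)=(a-z)/(1-\overline{a}z)$ directly and reads off $\mod{\psi(a)}=(1-\mod{a}^2)\mod{(\psi L_a)'(a)}\leq\blochnorm{\psi L_a}=1$, whereas you invoke Theorem~\ref{betterestimates} (whose proof is exactly that M\"obius-map computation) to get $\supnorm{\psi}=\theta_{\psi,\mathrm{id}}\leq\norm{M_\psi}=1$. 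This is the same argument, packaged one level higher.
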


\begin{proof} Arguing by contradiction, assume $\psi(0) = 0$. Since $M_\psi$ is an isometry, $$\blochnorm{\psi}=\beta_\psi = \blochnorm{M_\psi 1}=1.$$
For $a \in \D$ define the automorphism of $\D$ $$L_a(z) = \frac{a-z}{1-\conj{a}z}, \ z\in\D.$$ Then, $L_a \in H^\infty(\D)$ with $\supnorm{L_a} = 1$.  Again, since $M_\psi$ is an isometry, we obtain $\blochnorm{\psi L_a}=\supnorm{L_a} =1$.  Noting that $$\mod{\psi(a)} = (1-\mod{a}^2)\mod{(\psi L_a)'(a)} \leq \beta_{\psi L_a} =\blochnorm{\psi L_a}= 1,$$ taking the supremum over all $a\in\D$, it follows that $\supnorm{\psi} \leq 1$. Since $\beta_\psi \leq \supnorm{\psi}$, we have $1 = \beta_\psi \leq \supnorm{\psi} \leq 1$.  Thus
\begin{eqnarray} 1 = \supnorm{\psi} = \blochnorm{M_\psi(\psi)} = \beta_{\psi^2}.\label{psisquare1}\end{eqnarray}  On the other hand, by the Schwarz-Pick lemma, we get
$$\begin{aligned}\beta_{\psi^2} &= 2\sup_{z \in \D}(1-\mod{z}^2)\mod{\psi(z)}\mod{\psi'(z)} \leq 2\sup_{z \in \D}\mod{\psi(z)}(1-\mod{\psi(z)}^2)\\
&\leq 2\max_{x \in [0,1]} (x-x^3) = \frac{4}{3\sqrt{3}} < 1,
\end{aligned}$$ which contradicts (\ref{psisquare1}).
\end{proof}

\begin{theorem} There are no isometric multiplication operators $M_\psi$ from $H^\infty(\D)$ to $\Bloch(\D)$.\end{theorem}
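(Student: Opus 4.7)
The plan is to assume $M_\psi$ is an isometry and derive a contradiction. By Lemma~\ref{lemma:no_fix_origin}, $\psi(0)\neq 0$, so set $a_0=|\psi(0)|>0$ and $s=\supnorm{\psi}$. Applying $M_\psi$ to the constant function $1$ yields $\blochnorm{\psi}=a_0+\beta_\psi=1$. Testing next against the disk automorphisms $L_a(z)=(a-z)/(1-\conj{a}z)$, which satisfy $\supnorm{L_a}=1$, $L_a(0)=a$, $L_a(a)=0$ and $L_a'(a)=-1/(1-|a|^2)$, the isometry forces $\blochnorm{\psi L_a}=1$, whence $\beta_{\psi L_a}=1-a_0|a|$. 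Since $(1-|a|^2)|(\psi L_a)'(a)|=|\psi(a)|\leq\beta_{\psi L_a}$, we deduce $|\psi(a)|\leq 1-a_0|a|$ for every $a\in\D$; in particular $s\leq 1$.

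The decisive step is to test the isometry against the higher powers $f=\psi^{n-1}\in H^\infty(\D)$, which have sup-norm $s^{n-1}$. Isometry gives $\blochnorm{\psi^n}=s^{n-1}$, and combined with $\psi^n(0)=\psi(0)^n$ this yields $\beta_{\psi^n}=s^{n-1}-a_0^n$. Assume first that $\psi$ is nonconstant; the maximum modulus principle then forces $|\psi(z)|<s$ on $\D$, so $\psi/s$ is a self-map of $\D$ and Schwarz--Pick gives $(1-|z|^2)|\psi'(z)|\leq(s^2-|\psi(z)|^2)/s$. Expanding $\beta_{\psi^n}=\sup_z n|\psi|^{n-1}(1-|z|^2)|\psi'|$ and optimizing $t\mapsto t^{n-1}(s^2-t^2)$ over $[0,s]$ (the maximum occurs at $t=s\sqrt{(n-1)/(n+1)}$), I obtain
$$\beta_{\psi^n}\leq\frac{2ns^n}{n+1}\left(\frac{n-1}{n+1}\right)^{(n-1)/2}.$$
Combining with $\beta_{\psi^n}=s^{n-1}-a_0^n$, dividing by $s^{n-1}$, and letting $n\to\infty$, the left side tends to $1$ (since $a_0<s$ makes $(a_0/s)^{n-1}\to 0$) while the right side tends to $2s/e\leq 2/e<1$, yielding the desired contradiction $1\leq 2/e$.

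It remains to handle the case $\psi$ constant, say $\psi\equiv c$: then $\beta_\psi=0$, so $\blochnorm{\psi}=1$ forces $|c|=1$, and the isometry requirement $\blochnorm{cf}=\supnorm{f}$ collapses to $\blochnorm{f}=\supnorm{f}$ for every $f\in H^\infty(\D)$. This fails for $f(z)=z^2$, whose sup-norm is $1$ but whose Bloch seminorm is $\sup_{z\in\D}2(1-|z|^2)|z|=4/(3\sqrt{3})<1$. The principal obstacle I anticipate is identifying the right test functions: the automorphisms $L_a$ alone deliver only $s\leq 1$, and it is the full family of powers $\psi^{n-1}$, together with the Schwarz--Pick extremum $2/e<1$, that decisively breaks the would-be isometry.
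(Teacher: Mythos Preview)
Your proof is correct and takes a genuinely different route from the paper's. The paper tests $M_\psi$ against the identity, obtaining $f(z)=z\psi(z)$ with $\beta_f=1$, and then invokes an external structure theorem (Theorem~2.1 of \cite{CohenColonna:08}) describing bounded analytic functions on $\D$ with Bloch seminorm~$1$: such an $f$ is either a rotation, or its zeros $\{z_k\}$ satisfy $\limsup_k(1-|z_k|^2)|f'(z_k)|=1$. In the non-rotation case this forces $\beta_\psi\geq 1$, contradicting $\beta_\psi=1-|\psi(0)|<1$. Your argument is entirely self-contained: after the $L_a$ test to secure $s\leq 1$ (which the paper only carried out inside Lemma~\ref{lemma:no_fix_origin} under the hypothesis $\psi(0)=0$, so you rightly redo it), you test against the powers $\psi^{n-1}$ and combine Schwarz--Pick with the one-variable extremum of $t^{n-1}(s^2-t^2)$ to obtain the asymptotic bound $\beta_{\psi^n}/s^{n-1}\to 2s/e<1$, clashing with the isometry-forced limit $1$. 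Your approach avoids the cited structure theorem at the cost of a slightly longer computation; the paper's is terser but imports nontrivial machinery. Both handle the constant case identically via $z\mapsto z^2$.
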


\begin{proof} Assume $M_\psi$ is an isometry from $H^\infty(\D)$ into $\Bloch(\D)$.  By Lemma \ref{lemma:no_fix_origin}, the symbol $\psi$ cannot fix the origin. Since the identity function has supremum norm 1, the function $f$ defined by $f(z) = z\psi(z)$ has Bloch semi-norm 1.  Thus, by Theorem~2.1 of \cite{CohenColonna:08}, either $f$ is a rotation or the zeros of $f$ form an infinite sequence $\{z_k\}$ satisfying the condition
\begin{eqnarray}\limsup_{k\to\infty} (1-|z_k|^2)|f'(z_k)|=1.\label{1seminorm}\end{eqnarray} If $f$ is a rotation, then $\psi$ is a constant of modulus 1. Observe that constants cannot induce isometric multiplication operators since there exist functions in $H^\infty(\D)$ with supremum norm 1 which fix the origin and have Bloch semi-norm strictly less than 1 (e.g. the function $z\mapsto z^2$). Thus, $\psi$ cannot be a constant of modulus 1.

If $f$ is not a rotation, then $\psi$ has the same non-zero zeros of $f$ and $|z_k|\to 1$ as $k\to\infty$. Since $f'(z_k)=z_k\psi'(z_k)+\psi(z_k)=z_k\psi'(z_k)$, by (\ref{1seminorm}) we obtain
$$1=\limsup_{k\to\infty}(1-|z_k|^2)|z_k\psi'(z_k)|=\limsup_{k\to \infty}(1-|z_k|^2)|\psi'(z_k)|.$$ Hence $\beta_\psi = 1$. On the other hand, since $\psi$ does not fix the origin, $1 = \blochnorm{\psi} = \mod{\psi(0)} + \beta_\psi > 1$, which yields a contradiction. Therefore, no isometric multiplication operators from $H^\infty(\D)$ into $\Bloch(\D)$ can exist.
\end{proof}

\subsection{Isometric composition operators}

In \cite{Colonna:05} and \cite{AllenColonna:07}, it was shown that there is a large class of isometric composition operators on the Bloch space of the disk and more generally on bounded homogeneous domains that have the disk as a factor. We shall now prove that there are no isometries among the composition operators between the Hardy space $H^\infty$ and the Bloch space of the disk.

\begin{lemma}\label{zerotozero} If $\varphi$ is an analytic self-map of $\D$ inducing an isometric composition operator, then $\varphi(0) = 0$.\end{lemma}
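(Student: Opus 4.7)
The plan is to argue by contradiction: assume $\varphi(0)=w_0\neq 0$, and exhibit a function in $H^\infty(\D)$ of supremum norm $1$ whose image under $C_\varphi$ has Bloch norm strictly less than $1$, violating the isometry hypothesis.

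My choice of test function is $f=L_{w_0}^2$, where $L_{w_0}(z)=(w_0-z)/(1-\conj{w_0}z)$ is the M\"obius involution of $\D$ that swaps $0$ and $w_0$. Since $L_{w_0}$ is an automorphism of $\D$, its modulus tends to $1$ at $\partial\D$, so $\supnorm{f}=1$; simultaneously, $f(w_0)=0$. Combining these two facts with the isometry hypothesis yields
$$\beta_{f\circ\varphi}=\blochnorm{f\circ\varphi}-\mod{f(\varphi(0))}=\supnorm{f}-0=1.$$

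The remaining step is to bound $\beta_{f\circ\varphi}$ directly. Setting $g=L_{w_0}\circ\varphi$, we obtain a holomorphic self-map of $\D$ with $g(0)=0$ and $f\circ\varphi=g^2$. Applying the Schwarz-Pick lemma to $g$ gives
\begin{align*}
(1-\mod{z}^2)\mod{(f\circ\varphi)'(z)}&=2(1-\mod{z}^2)\mod{g(z)}\mod{g'(z)}\\
&\leq 2\mod{g(z)}(1-\mod{g(z)}^2)\\
&\leq \max_{x\in[0,1]}2x(1-x^2)=\frac{4}{3\sqrt{3}}.
\end{align*}
Hence $\beta_{f\circ\varphi}\leq 4/(3\sqrt{3})<1$, contradicting the preceding display and forcing $\varphi(0)=0$.

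The only nonobvious ingredient is the choice of test function. Squaring $L_{w_0}$ simultaneously secures $\supnorm{f}=1$, the vanishing $f(\varphi(0))=0$ (needed to reduce the Bloch norm of $f\circ\varphi$ to its seminorm), and a factor of $g(z)$ inside $(f\circ\varphi)'(z)$ that pairs with Schwarz-Pick to produce the familiar maximum $4/(3\sqrt{3})$ of $2x(1-x^2)$. This closely mirrors the construction used in the multiplication-operator Lemma~\ref{lemma:no_fix_origin}, where $\psi^2$ plays the analogous role; I do not foresee any serious technical obstacles beyond the clever selection of~$f$.
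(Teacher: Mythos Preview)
Your proof is correct, but it takes a genuinely different route from the paper's argument for this lemma. The paper tests $C_\varphi$ against three functions—the identity and $g_\pm(z)=(1\pm z)/2$—to obtain the simultaneous equalities $\mod{1+\varphi(0)}=1+\mod{\varphi(0)}=\mod{1-\varphi(0)}$, which pin $\varphi(0)$ at the origin by elementary geometry. Your argument instead uses a single test function $L_{w_0}^2$ together with the Schwarz--Pick lemma, closely mirroring (as you note) the structure of Lemma~\ref{lemma:no_fix_origin} and, in fact, the proof of the theorem that immediately follows this lemma in the paper.

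One further observation worth making: the hypothesis $w_0\neq 0$ is never invoked in your calculation. When $w_0=0$ one has $L_0(z)=-z$, $f(z)=z^2$, and $g=-\varphi$, and the Schwarz--Pick bound $\beta_{f\circ\varphi}\le 4/(3\sqrt{3})<1$ goes through unchanged. Thus your argument actually establishes the stronger statement that no $C_\varphi:H^\infty(\D)\to\Bloch(\D)$ can be an isometry, collapsing Lemma~\ref{zerotozero} and the subsequent theorem into a single step. The paper separates the two, using algebraic constraints to force $\varphi(0)=0$ first and only then applying the $z^2$/Schwarz--Pick trick; your approach is more economical, while the paper's has the virtue of isolating the origin-fixing property as an independent fact.
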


\begin{proof} Assume $C_\varphi$ is an isometry from $H^\infty(\D)$ into $\Bloch(\D)$.  Then letting $f$ be the identity, we have
\begin{eqnarray}\mod{\varphi(0)}+\beta_\varphi= \blochnorm{\varphi} = \blochnorm{C_\varphi f} = \supnorm{f} = 1.\label{phionenorm}\end{eqnarray}
 Furthermore, letting, for $z \in \D$,
$$g_+(z) = \frac{1+z}{2}, \hbox{ and }\, g_-(z) = \frac{1-z}{2},$$ we see that $\supnorm{g_+} = \supnorm{g_-} = 1$.  Thus $\blochnorm{C_\varphi g_+} = \blochnorm{C_\varphi g_-} = 1$, and hence $$\mod{1+\varphi(0)} + \beta_\varphi = 2 = \mod{1-\varphi(0)} + \beta_\varphi.$$
\vskip3pt

\noindent This, combined with (\ref{phionenorm}), yields $\mod{1+\varphi(0)} = 1+\mod{\varphi(0)} = \mod{1-\varphi(0)}$, which implies $\varphi(0) = 0$.
\end{proof}

\begin{theorem} There are no isometric composition operators from $H^\infty(\D)$ to $\Bloch(\D)$.\end{theorem}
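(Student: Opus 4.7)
The plan is to leverage Lemma~\ref{zerotozero} and then test the supposed isometry on a single carefully chosen function in $H^\infty(\D)$, following the Schwarz--Pick trick already used in Lemma~\ref{lemma:no_fix_origin}.

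First, I would assume toward a contradiction that $C_\varphi : H^\infty(\D) \to \Bloch(\D)$ is an isometry. By Lemma~\ref{zerotozero} we get $\varphi(0)=0$, so for every $f\in H^\infty(\D)$ we have
\begin{equation*}
\blochnorm{C_\varphi f} = |f(\varphi(0))| + \beta_{f\circ\varphi} = |f(0)| + \beta_{f\circ\varphi}.
\end{equation*}
Applying this with the test function $f(z)=z^2$, which satisfies $\supnorm{f}=1$ and $f(0)=0$, the isometry yields $\beta_{\varphi^2}=1$.

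Next I would bound $\beta_{\varphi^2}$ directly. Since $(\varphi^2)'(z) = 2\varphi(z)\varphi'(z)$, the Schwarz--Pick lemma gives
\begin{equation*}
\beta_{\varphi^2} = 2\sup_{z\in\D}(1-|z|^2)|\varphi(z)||\varphi'(z)| \le 2\sup_{z\in\D}|\varphi(z)|(1-|\varphi(z)|^2).
\end{equation*}
Elementary calculus on $[0,1]$ shows $\max_{x\in[0,1]} 2x(1-x^2) = 4/(3\sqrt{3}) < 1$, so $\beta_{\varphi^2} < 1$, contradicting $\beta_{\varphi^2}=1$ from the previous paragraph.

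There is essentially no obstacle here, since the heavy lifting (extracting $\varphi(0)=0$) is already done in Lemma~\ref{zerotozero} and the Schwarz--Pick computation is identical to the one appearing in Lemma~\ref{lemma:no_fix_origin}. The only point to double-check is that $\varphi^2 \in H^\infty(\D)$ with $\|\varphi^2\|_\infty \le 1$ so that the Schwarz--Pick bound applies, which is immediate since $\varphi$ is a self-map of $\D$.
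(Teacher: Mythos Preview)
Your proposal is correct and follows essentially the same route as the paper's own proof: invoke Lemma~\ref{zerotozero} to get $\varphi(0)=0$, test the isometry on $f(z)=z^2$ so that $\blochnorm{C_\varphi f}=\beta_{\varphi^2}=1$, and then contradict this via the Schwarz--Pick estimate $\beta_{\varphi^2}\le 4/(3\sqrt{3})<1$. The only minor remark is that your closing parenthetical about checking $\varphi^2\in H^\infty(\D)$ is slightly off target; what Schwarz--Pick actually needs is that $\varphi$ itself is a holomorphic self-map of $\D$, which is given by hypothesis.
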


\begin{proof}  Suppose $C_\varphi$ is an isometry from $H^\infty(\D)$ to $\Bloch(\D)$.
Consider the function $f(z) = z^2$ for $z \in \D$.  Since $C_\varphi$ is an isometry, we have $\blochnorm{C_\varphi f} = \supnorm{f} = 1$.  On the other hand, arguing as in the proof of Lemma~\ref{lemma:no_fix_origin}, by Lemma~\ref{zerotozero} and the Schwarz-Pick Lemma, we obtain
\begin{eqnarray}
\blochnorm{C_\varphi f} &=& \sup_{z \in \D}2(1-\mod{z}^2)\mod{\varphi(z)}\mod{\varphi'(z)}\nonumber\\
&\leq& \sup_{z \in \D} 2\mod{\varphi(z)}(1-\mod{\varphi(z)}^2) \leq \frac{4}{3\sqrt{3}} < 1,\nonumber
\end{eqnarray}
reaching a contradiction.
\end{proof}

\subsection{Isometric weighted composition operators from $\Bloch$ to $H^\infty$ in the polydisk}
Let $D$ be a bounded homogeneous domain. In \cite{AllenColonna:09}, we characterized the bounded  weighted composition operators from $\Bloch(D)$ to $H^\infty(D)$. In particular, for $D=\D^n$, we obtained a characterization of boundedness equivalent to the following result proved by Li and Stevi\'c in \cite{LiStevic:07}.

\begin{theorem}[\cite{LiStevic:07}] $W_{\psi,\varphi}:\Bloch(\D^n)\to H^\infty(\D^n)$ is bounded if and only if $\psi\in H^\infty(\D^n)$ and \begin{eqnarray}\sup_{z\in D}\mod{\psi(z)}\sum_{k=1}^n\log\frac{2}{1-\mod{\varphi_j(z)}}<\infty.\label{condpolydisk}\end{eqnarray}\end{theorem}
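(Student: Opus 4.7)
The plan is to establish the two directions separately, using an explicit family of logarithmic test functions for necessity and the Lipschitz property (\ref{Lipschitzcond}) for sufficiency. (Note the minor typo in the statement: inside the sum, $\varphi_j$ should read $\varphi_k$.)

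For sufficiency, assume $\psi\in H^\infty(\D^n)$ and that (\ref{condpolydisk}) holds. Since the Bergman metric on $\D^n$ is the direct sum of the one-dimensional Bergman metrics on the factors, the associated distance from the origin satisfies $\rho(0,w)\le\sum_{k=1}^n d(0,w_k)$, where $d$ is the one-variable Bergman distance on $\D$. A routine one-variable computation gives $d(0,w_k)\le C\log\frac{2}{1-\mod{w_k}}$ for an absolute constant $C$. Applying (\ref{Lipschitzcond}) to an arbitrary $f\in\Bloch(\D^n)$ at the points $\varphi(z)$ and $0$ then yields
$$|f(\varphi(z))|\le |f(0)|+\blochnorm{f}\,\rho(0,\varphi(z))\le \blochnorm{f}\Bigl(1+C\sum_{k=1}^n\log\frac{2}{1-\mod{\varphi_k(z)}}\Bigr).$$
Multiplying by $|\psi(z)|$ and combining $\supnorm{\psi}<\infty$ with (\ref{condpolydisk}) produces a uniform bound on $\supnorm{W_{\psi,\varphi}f}$ by a constant multiple of $\blochnorm{f}$.

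For necessity, suppose $W_{\psi,\varphi}$ is bounded. Testing on $f\equiv 1$ immediately gives $\psi=W_{\psi,\varphi}1\in H^\infty(\D^n)$ with $\supnorm{\psi}\le\norm{W_{\psi,\varphi}}$. To obtain the sum condition, fix $z\in\D^n$ and consider
$$f_z(\zeta)=\sum_{k=1}^n\log\frac{2}{1-\overline{\varphi_k(z)}\zeta_k},\qquad \zeta\in\D^n.$$
Each summand depends on only one coordinate, and a direct Schwarz--Pick estimate shows that the map $\zeta\mapsto\log\frac{2}{1-\bar a\zeta}$ on $\D$ has Bloch norm bounded by $2+\log 2$ independently of $a\in\D$. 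Because the polydisk Bergman metric splits coordinatewise, a one-variable Bloch function promoted to $\D^n$ retains the same Bloch seminorm, so $\blochnorm{f_z}\le n(2+\log 2)$ uniformly in $z$. Evaluating at $\zeta=\varphi(z)$ produces the real nonnegative quantity $\sum_{k=1}^n\log\frac{2}{1-\mod{\varphi_k(z)}^2}$, and therefore
$$|\psi(z)|\sum_{k=1}^n\log\frac{2}{1-\mod{\varphi_k(z)}^2}=|W_{\psi,\varphi}f_z(z)|\le n(2+\log 2)\norm{W_{\psi,\varphi}}.$$
Since $\log\frac{2}{1-\mod{w}^2}$ and $\log\frac{2}{1-\mod{w}}$ on $\D$ differ by at most $\log 2$ and both exceed $\log 2$, they are comparable, and (\ref{condpolydisk}) follows.

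The main obstacle is justifying the uniform Bloch-norm bound on the test functions $f_z$. The product structure of the polydisk Bergman metric does the heavy lifting here: it both reduces the Bloch computation for each summand to a one-variable Schwarz--Pick argument and supplies the estimate $\rho(0,\cdot)\le C\sum_k\log\frac{2}{1-\mod{\cdot_k}}$ used in the sufficiency direction. Everything else is a bookkeeping of constants.
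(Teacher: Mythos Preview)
The paper does not supply its own proof of this statement; it is quoted verbatim from \cite{LiStevic:07} (and alluded to as a special case of results in \cite{AllenColonna:09}) and then used as a black box to derive Theorem~\ref{noisometries}. So there is nothing in the paper to compare against beyond the bare citation.

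Your argument is correct and follows what is in fact the standard route for this kind of result. For sufficiency, the product structure of the polydisk Bergman metric gives $\rho(0,w)=\bigl(\sum_k d(0,w_k)^2\bigr)^{1/2}\le\sum_k d(0,w_k)$, and the one-variable formula $d(0,w_k)=\tfrac12\log\tfrac{1+|w_k|}{1-|w_k|}\le\tfrac12\log\tfrac{2}{1-|w_k|}$ feeds directly into (\ref{Lipschitzcond}); splitting the resulting bound into the ``$1$'' part (controlled by $\supnorm{\psi}$) and the logarithmic part (controlled by (\ref{condpolydisk})) is exactly right. For necessity, your test functions $f_z$ are the natural ones: the identity $|1-\bar a\zeta|\ge 1-|a||\zeta|$ gives $(1-|\zeta|^2)\bigl|\tfrac{\bar a}{1-\bar a\zeta}\bigr|\le 1+|\zeta|\le 2$, so each summand has one-variable Bloch seminorm at most $2$, and the coordinatewise splitting of $H_z$ on $\D^n$ shows that promoting a one-variable Bloch function to the polydisk preserves the seminorm; summing gives $\blochnorm{f_z}\le n(2+\log 2)$ uniformly in $z$. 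The final comparison between $\log\tfrac{2}{1-|w|^2}$ and $\log\tfrac{2}{1-|w|}$ via $0\le\log(1+|w|)\le\log 2$ is also fine. The only cosmetic issue is the index typo you already flagged.
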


	As a consequence we obtain the following result.

\begin{theorem}\label{noisometries} There are no isometric weighted composition operators from $\Bloch(\D^n)$ to $H^\infty(\D^n)$.
\end{theorem}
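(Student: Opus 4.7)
The plan is to combine the Li--Stevi\'c boundedness characterization with two very simple test functions. Suppose, toward a contradiction, that $W_{\psi,\varphi}\colon \Bloch(\D^n)\to H^\infty(\D^n)$ is an isometry. Since every isometry is bounded, the theorem quoted just above yields a finite constant
$$M := \sup_{z\in\D^n}\mod{\psi(z)}\sum_{k=1}^n \log\frac{2}{1-\mod{\varphi_k(z)}}.$$

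I would then exploit two specific choices of test function in $\Bloch(\D^n)$. First, the constant function $f\equiv 1$ has Bloch norm $1$, so the isometry property forces $\supnorm{\psi}=1$. Second, for any coordinate projection $f_j(z)=z_j$, a direct Bergman-metric computation gives $Q_{f_j}(z)=1-\mod{z_j}^2$; indeed, dropping all but the $j$-th summand in the polydisk Bergman metric yields $\mod{u_j}^2\leq(1-\mod{z_j}^2)^2 H_z(u,\overline u)$, with equality at $u=e_j$. Hence $\blochnorm{f_j}=0+1=1$, and the isometry property forces $\supnorm{\psi\varphi_j}=1$ for every $j\in\{1,\dots,n\}$.

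The contradiction is then immediate. Pick a sequence $\{z_\nu\}\subset\D^n$ with $\mod{\psi(z_\nu)\varphi_j(z_\nu)}\to 1$. Since $\mod{\psi}\leq 1$ throughout $\D^n$ and $\mod{\varphi_j}<1$, both factors must tend to $1$ along the sequence. In particular $\mod{\varphi_j(z_\nu)}\to 1$, so $\log\frac{2}{1-\mod{\varphi_j(z_\nu)}}\to\infty$, and the bound $\mod{\psi(z_\nu)}\log\frac{2}{1-\mod{\varphi_j(z_\nu)}}\leq M$ drives $\mod{\psi(z_\nu)}\to 0$, contradicting $\mod{\psi(z_\nu)}\to 1$.

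The real obstacle is conceptual rather than computational: one must recognize that the logarithmic growth appearing in the Li--Stevi\'c boundedness criterion is already incompatible with the requirement that the sup norm of $\psi\varphi_j$ actually attain the value $\blochnorm{f_j}=1$. Once that observation is made, the evaluation of $\blochnorm{f_j}$ is a one-line consequence of the explicit form of the Bergman metric on $\D^n$ recorded in Section~2, and no delicate extremal function for $\Bloch(\D^n)$ is needed.
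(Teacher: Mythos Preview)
Your proof is correct and follows essentially the same approach as the paper: test with the constant $1$ to get $\supnorm{\psi}=1$, test with a coordinate projection to get $\supnorm{\psi\varphi_j}=1$, then use the Li--Stevi\'c boundedness condition to derive a contradiction along a sequence where $\mod{\psi(z_\nu)\varphi_j(z_\nu)}\to 1$. The only difference is that you spell out the verification $\blochnorm{f_j}=1$ via the explicit Bergman metric on $\D^n$, which the paper states without proof.
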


\begin{proof} Assume $W_{\psi,\varphi}:\Bloch(\D^n)\to H^\infty(\D^n)$ is an isometry. Then $$\supnorm{\psi}=\supnorm{W_{\psi,\varphi}1}=1$$ and, fixing $j=1,\dots,n$,
$$ \supnorm{\psi\varphi_j}=\supnorm{W_{\psi,\varphi}p_j}=\blochnorm{p_j}=1,$$ where $p_j$ is the projection $z\mapsto z_j$. Thus, there exists a sequence $\{z^{(m)}\}$ in $\D^n$ such that $\mod{\psi(z^{(m)})\varphi_j(z^{(m)})}\to 1$ as $m\to\infty$. Since both $\psi$ and $\varphi_j$ map $\D^n$ into $\overline{\D}$, it follows that $\mod{\psi(z^{(m)})}\to 1$ and $\mod{\varphi_j(z^{(m)})}\to 1$ as $m\to\infty$. On the other hand, by the boundedness of $W_{\psi,\varphi}$, (\ref{condpolydisk}) implies that $\psi(z^{(m)})\to 0$, a contradiction.
\end{proof}

\subsection{Final remarks}
By Corollary~\ref{charcomplilBloch}, the bounded weighted composition operators from $H^\infty(\B_n)$ to $\lilBloch(\B_n)$ are necessarily compact, so there exist no isometric weighted composition operators from $H^\infty(\B_n)$ to $\lilBloch(\B_n)$.
	
We have not been able to prove or disprove the existence of isometric weighted composition operators from $H^\infty$ to the Bloch space, even in the case of the unit disk.

We conjecture that there are no isometric weighted composition operators from $H^\infty(D)$ to $\Bloch(D)$ for any bounded homogeneous domain $D$.

\bibliographystyle{amsplain}
\bibliography{references.bib}
\end{document}